\theoremstyle{plain}
\newtheorem{theorem}{Theorem}[section]
\newtheorem{corollary}[theorem]{Corollary}
\newtheorem{lemma}[theorem]{Lemma}
\newtheorem{proposition}[theorem]{Proposition}
\newtheorem{definition}[theorem]{Definition}
\newtheorem{remark}[theorem]{Remark}
\numberwithin{equation}{section}
\def\boxit#1{\vbox{\hrule\hbox{\vrule\kern3pt
     \vbox{\kern3pt#1\kern3pt}\kern3pt\vrule}\hrule}}
\newfont{\msam}{msam10}            
\newfont{\msym}{msbm10 scaled\magstep1}            
\newfont{\gotic}{eufm10 scaled\magstep1}
\newcommand{\ra}{\rightarrow}
\newcommand{\lra}{\mbox{\Huge $\longrightarrow$}}
\newcommand{\kra}{\kern-7pt\rightarrow\kern-7pt}
\def\bbr{{\mathbb R}}
\def\bbn{{\mathbb N}}
\def\bbc{{\mathbb C}}
\def\ra{\rightarrow}
\def\lra{\longrightarrow}
\numberwithin{equation}{section}
\def\begtab{\begin{tabbing} WW\=23/02: \= point 1\kill}
\def\NIL1{{\mathcal H^{3}}}
\def\wt{\widetilde}
\def\n2{\mathfrak{N}_2}
\def\frakm{{\mathfrak m}}
\begin{document}

\title[ The holonomy on the principal $U(n)$ bundle over $D_{n,m}$ ]{Topological property of the holonomy displacement on the principal $U(n)$-bundle over $D_{n,m},$ related to complex surfaces }

\author{Taechang Byun}
\address{Department of Mathematics and Statistics, Sejong University,
Seoul 143-747, Korea}
\email{tcbyun@sejong.ac.kr}


\subjclass[2010]{53C29, 53C30, 53C35, 52A38}

\keywords{Holonomy displacement, Area form, Riemannian submersion, complex surface, principal bundle, Lie algebra, complete totally geodesic  submanifold, Hermition form, Grassmannian manifold}
\maketitle

\begin{abstract}
Consider $D_{n,m} = U(n,m)/\left(U(n) \times U(m)\right)$, the dual of the the Grassmannian manifold and the principal $U(n)$ bundle over $D_{n,m},$
$U(n)\rightarrow  U(n,m)/U(m) \stackrel{\pi} \rightarrow  D_{n,m}$.
Given a nontrivial $X \in M_{m \times n}(\mathbb{C}),$ consider a two dimensional subspace $\mathfrak{m}' \subset \mathfrak{m} \subset \mathfrak{u}(n,m), $
induced by $X, iX \in M_{m \times n}(\mathbb{C}),$
and a complete oriented surface $S,$
related to $(X,g)  \in M_{m \times n}(\mathbb{C}) \times U(n,m), $
in the base space $D_{n,m}$ with a complex structure from $\mathfrak{m}'.$
Let $c$ be a smooth, simple, closed, orientation-preserving curve on $S$
parametrized by $0\leq t\leq 1$, and $\hat{c}$ its horizontal lift on
the bundle
$U(n) \ra U(n,m)/U(m) \stackrel{\pi}\ra D_{n,m} $.
Then
the holonomy displacement is given by the right action of $e^\Psi$ for some 
$
  \Psi \in \text{Span}_{\bbr}\{i(X^*X)^k\}_{k=1}^{q} \subset \mathfrak{u}(n),
  \: q=\text{rk}X,
$
such that 
$$
\hat{c}(1) = \hat{c}(0) \cdot  e^\Psi
\text{\hskip24pt and\hskip12pt } 
\text{Tr}(\Psi)= 2i \, \text{Area}(c),
$$
where $\text{Area}(c)$ is the area of the region on the surface $S$
surrounded by $c,$ 
obtained from a special 2-form $\omega_{(X,g)}$ on $S,$  
called an area form $\omega_{(X,g)}$ related to $(X,g)$ on $S.$ 
\end{abstract}

\section{Introduction}

Gauss-Bonnet Theorem shows a kind of relation between Riemannian Geometry and Topology through two kinds of curvatures -Gaussian curvature and geodesic curvature-(, angles if needed) and Euler-characterstic. In this paper, we explain a similar phenomenon in some principal bundles through area and holonomy displacement.  

\medskip
In \cite{Pin}, Pinkall showed that the holonomy displacement of a simple closed curve in the base space on the Hopf bundle $S^1 \ra S^3 \ra S^2$ depends on the area of its interior. Byun and Choi \cite{BC} generalized this result to the principal $U(n)$-bundle over the Grassmannian manifold $G_{n,m}$ of complex $n$-planes in $\bbc^{n+m},$
$$U(n) \ra U(n+m)/U(m) \ra G_{n,m},$$
by introducing $U_{m,n}(\bbc),$ where
$$
  U_{m,n}(\bbc):= 
  \{ 
     X \in M_{m \times n}(\bbc) 
     \,\, | \,\, 
     X^{*} X = \lambda I_n \text{ for some } \lambda \in \bbc - \{0\}
  \}.
$$
Especially, the result related to a complex surface in $G_{n,m}$ can be summarized as follows:

\begin{theorem}\cite{BC}
Assume $U(k), \; k=1,2, \cdots,$ has a metric, related to the Killing-Cartan form, given by
$$
  \langle A,B \rangle 
   = \tfrac{1}{k} \text{Re}\big(\text{Tr}(A^{*}B)\big), 
      \qquad A,B \in \mathfrak{u}(k),
$$
Consider a bundle 
$U(n) \rightarrow U(n+m)/U(m) \stackrel{pr} \longrightarrow G_{n,m}, $
where $pr: U(n+m)/U(m) \ra G_{n,m}$ is a Riemannian submersion.
Given a nontrivial $X\in U_{m,n}(\mathbb{C})$, a two dimensional subspace $\mathfrak{m}' \in \mathfrak{u}(n+m)$, induced by $X \text{ and } iX ,$ gives rise to a complete totally geodesic surface $S$ with a complex structure in the base space $G_{n,m}.$ 
And  
if $c:[0,1]\ra S$ is a piecewise smooth, simple, closed curve on $S$
and if $\tilde{c}$ is its horizontal lift,
then the holonomy displacement along $c$,
$$\wt c(1)= \wt c(0) \cdot V(c),$$
is given by the right action of
$
  V(c)=e^{i \theta} I_n  \in U(n),
$
where $A(c)$ is the induced area of the region, surrounded
by $c,$ on the surface $S,$   from the metric on $G_{n,m},$  
and $\theta= 2 \cdot \tfrac{n+m}{2n} A(c) .$
\end{theorem}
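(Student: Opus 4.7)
The plan is to realize $S$ as a totally geodesic orbit of an auxiliary $SU(2)$-subgroup via a Lie-triple-system argument, compute the curvature of the canonical principal connection on the reductive bundle, and then exploit the fact that this curvature is forced to lie in the one-dimensional center $\mathbb{R}\cdot iI_n \subset \mathfrak{u}(n)$, so that the a priori non-abelian holonomy collapses to a single scalar integration against the area form.

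Writing elements of $\mathfrak{m}\subset\mathfrak{u}(n+m)$ as block matrices
\[
\widehat{Z}=\begin{pmatrix}0&-Z^*\\Z&0\end{pmatrix},\qquad Z\in M_{m\times n}(\mathbb{C}),
\]
the subspace $\mathfrak{m}'=\mathbb{R}\widehat{X}\oplus\mathbb{R}\widehat{iX}$ is exactly $\{\widehat Z : Z=zX,\; z\in\mathbb{C}\}$. A direct block computation using $X^*X=\lambda I_n$ gives that $[\widehat X,\widehat{iX}]$ has $\mathfrak{u}(n)$-component equal to the central element $-2i\lambda I_n$ and $\mathfrak{u}(m)$-component $2i\,XX^*$, and a second bracket yields $[[\widehat X,\widehat{iX}],\widehat X]=4\lambda\,\widehat{iX}$ and $[[\widehat X,\widehat{iX}],\widehat{iX}]=-4\lambda\,\widehat X$. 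The last two identities show $\mathfrak{m}'$ is a Lie triple system in the symmetric pair $(\mathfrak{u}(n{+}m),\mathfrak{u}(n)\oplus\mathfrak{u}(m))$, so by Cartan's theorem $S=\exp(\mathfrak{m}')\cdot o$ is a complete totally geodesic surface. Moreover $\mathfrak{l}:=\mathfrak{m}'\oplus\mathbb{R}[\widehat X,\widehat{iX}]$ integrates to a compact subgroup $L\simeq SU(2)$ acting transitively on $S$ (a round $2$-sphere), and $J\widehat X=\widehat{iX}$ induces the asserted complex structure.

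For the canonical principal connection on $U(n)\to U(n+m)/U(m)\to G_{n,m}$, the curvature $2$-form at the origin is $\Omega(A,B)=-[A,B]_{\mathfrak{u}(n)}$ for $A,B\in\mathfrak{m}$, so the first computation above gives $\Omega(\widehat X,\widehat{iX})=2i\lambda\,I_n\in\mathbb{R}\cdot iI_n$. Since the metric, bundle, and connection are $U(n+m)$-invariant, the pull-back $\Omega|_S$ is $L$-invariant and therefore takes values in the center $\mathbb{R}\cdot iI_n$ at every point of $S$. The infinitesimal holonomies consequently all commute, the path-ordered exponential collapses to an ordinary one, and Stokes' theorem yields
\[
V(c)=\exp\!\Bigl(\int_R\Omega\Bigr)=e^{i\theta}I_n,
\]
where $R\subset S$ is the region bounded by $c$.

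To pin down $\theta$ I would use $\langle A,B\rangle=\tfrac{1}{n+m}\mathrm{Re}\,\mathrm{Tr}(A^*B)$: the block calculation gives $\|\widehat X\|^2=\|\widehat{iX}\|^2=\tfrac{2n\lambda}{n+m}$ and $\langle\widehat X,\widehat{iX}\rangle=0$, so in coordinates $(x,y)$ dual to $(\widehat X,\widehat{iX})$ the area form at $o$ is $dA_S=\tfrac{2n\lambda}{n+m}\,dx\wedge dy$ while $\Omega=(2\lambda\,iI_n)\,dx\wedge dy$. By $L$-invariance this proportionality extends over $S$, giving $\Omega=\tfrac{n+m}{n}\,iI_n\cdot dA_S$ and hence $\theta=\tfrac{n+m}{n}A(c)=2\cdot\tfrac{n+m}{2n}A(c)$, as asserted. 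The main obstacle is exactly this globalization step: certifying that $\Omega|_S$ remains center-valued at every point of $S$ is the crucial structural consequence of $X^*X=\lambda I_n$ combined with the $SU(2)$-homogeneity of $S$, and it is what allows the a priori non-abelian holonomy to be replaced by a scalar exponential of an area.
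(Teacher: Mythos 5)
Your proposal is correct in outline (up to the usual sign/orientation conventions), but it takes a genuinely different route from the one used in this paper and in \cite{BC}. The statement you proved is quoted from \cite{BC} and is not reproved here; the closest in-paper analogue is the dual, noncompact computation of Section \ref{special2} and Corollary \ref{corollary}, whose method is to \emph{construct the horizontal lift explicitly}: write $\tilde{\gamma}(t)=e^{\widehat{z(t)W}}$, posit a lift $\tilde{\gamma}(t)e^{\Psi(t)}$ with $\Psi(t)=i\phi(t)W^*W$ (more generally $i\sum_k\phi_k(t)(W^*W)^k$), use the singular value decomposition to reduce the horizontality condition to the scalar ODE $\phi'(t)=\theta'(t)\sinh^2(r(t))$ (which would read $\sin^2$ in your compact setting), and then apply Stokes to $\mathrm{Tr}(\Psi)$. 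You instead work at the level of the canonical connection: the $\mathfrak{u}(n)$-component of $[\widehat{X},\widehat{iX}]$ equals $-2i\lambda I_n$ precisely because $X^*X=\lambda I_n$, so the curvature restricted to $S$ is center-valued, Ambrose--Singer reduces the structure group over $S$ to $U(1)\cdot I_n$, and the non-abelian path-ordered exponential collapses to $e^{i\theta}I_n$ with $\theta$ read off from the ratio of $\Omega$ to the area form; your bracket computations, the Lie-triple-system verification, and the normalization $\|\widehat{X}\|^2=\tfrac{2n\lambda}{n+m}$ giving $\theta=\tfrac{n+m}{n}A(c)$ all check out. Your argument is more conceptual and isolates exactly why the holonomy is scalar in this case; the trade-off is that it leans entirely on centrality, which fails for general $X\in M_{m\times n}(\mathbb{C})$ --- there the curvature on $S$ takes values only in the abelian but non-central algebra $\mathrm{Span}_{\mathbb{R}}\{i(X^*X)^k\}_{k=1}^{q}$, and the explicit lift-and-ODE machinery of Theorem \ref{thm} and Remark \ref{remark_soln} is what carries the general case, whereas your shortcut would require a more careful Ambrose--Singer argument tracking conjugation by parallel transport.
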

If the metric is changed into
$$
  \langle A,B \rangle 
   = \tfrac{1}{2} \text{Re}\big(\text{Tr}(A^{*}B)\big), 
      \qquad A,B \in \mathfrak{u}(k),
$$
then the the result will be done into $\theta= 2 \cdot \tfrac{1}{n} A(c),$
and from $e^{i\theta}I_n = e^{i\theta I_n}, $  it can be read as
$$
   i\theta I_n \in \mathfrak{u}(n) 
   \quad \text{and} \quad
   \text{Tr}(i \theta I_n) = i n \theta = 2i \,A(c),  
$$
where $A(c)$ is the changed area induced from the changed metric on $G_{n,m}.$

\bigskip
\vspace{0.3cm}
  On the other hand, Choi and Lee \cite{CL} thought of the dual version of Pinkall's result over $\bbc H^m:$ 
\begin{theorem}\cite{CL} \label{pull-back}
Let $S^1 \ra S^{2m,1} \ra \bbc H^m$ be the natural fibration. Let $S$ be a complete totally geodesic surface in $\bbc H^m,$ and $\xi_S$ be the pullback bundle over $S.$ Let $c$ be a piecewise smooth, simple closed curve on $S.$ Then the holonomy displacement along $c$ is given by
$$ V(c)= e^{\frac{1}{2}A(c)i} \text{ or } e^{0i} \: \in S^1,$$
where 
$
 A(c)
   \footnote{
         In \cite{CL}, 
         the bundle $S^1 \ra S^{2m,1} \ra \bbc H^m$ is being studied through 
         the one 
         $U(1) \ra U(1,m)/U(m) \ra U(1,m)/(U(1)\times U(m))$ with 
         $ U(1,m)/U(m)\cong S^{2m,1}$  
         and $U(1,m)/(U(1)\times U(m)) \cong\bbc H^m,$ 
         and the unmentioned diffeomorphism 
         $f:\bbc H^m \lra U(1,m)/(U(1)\times U(m))$ 
         is a conformal map satisfying $|f_* v| = \frac{1}{2} |v|.$  
   }
$ 
is the area of the region on the surface $S$ surrounded by $c,$ depending on whether $S$ is a complex submanifold or not.
\end{theorem}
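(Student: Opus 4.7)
The plan is to reduce the computation to the Cartan decomposition of $\mathfrak{u}(1,m)$ and to split into the two cases produced by the classification of totally geodesic surfaces in $\bbc H^{m}$. Write $\mathfrak{u}(1,m) = \mathfrak{k} \oplus \mathfrak{m}$ with $\mathfrak{k} = \mathfrak{u}(1) \oplus \mathfrak{u}(m)$ and
$$
   \mathfrak{m}
      = \left\{ X_v := \begin{pmatrix} 0 & v^{*} \\ v & 0 \end{pmatrix} : v \in \bbc^{m} \right\}.
$$
Under the identification $U(1,m)/U(m) \cong S^{2m,1}$, the canonical connection on the principal $U(1)$-bundle $U(1,m)/U(m) \to U(1,m)/(U(1)\times U(m))$ has horizontal distribution determined by $\mathfrak{m}$ and vertical distribution by $\mathfrak{u}(1)$, and its curvature form at the origin is $\Omega(X_v, X_w) = -[X_v, X_w]_{\mathfrak{u}(1)}$. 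The holonomy around $c$ is then the exponential of the integral of $\Omega$ over the region bounded by $c$, so the entire argument reduces to computing this Lie bracket on $T_{0}S$.

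Next, I would invoke the classification of complete totally geodesic surfaces in the rank-one symmetric space $\bbc H^{m}$: up to an ambient isometry, any such $S$ is either a complex line ($\bbc H^{1}$), whose tangent 2-plane at the origin has the form $\text{Span}_{\bbr}\{v, iv\}$, or a totally real 2-plane ($\bbr H^{2}$), whose tangent 2-plane is isotropic with respect to the K\"ahler form and can be spanned by $v,w \in \bbc^{m}$ with $\text{Im}(v^{*}w) = 0$. By homogeneity I may assume $S$ passes through the origin. A direct matrix calculation gives
$$
   [X_v, X_w]
      = \begin{pmatrix} v^{*}w - w^{*}v & 0 \\ 0 & vw^{*} - wv^{*} \end{pmatrix},
$$
whose $\mathfrak{u}(1)$-block is the pure imaginary scalar $v^{*}w - w^{*}v = 2i\,\text{Im}(v^{*}w)$.

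In the totally real (isotropic) case the $\mathfrak{u}(1)$-block vanishes on a basis of $T_{0}S$ by the characterization above, so $\Omega|_{T_{0}S} = 0$; $U(m)$-equivariance of the canonical connection then forces $\Omega|_{S} \equiv 0$, so the horizontal lift of $c$ closes up and $V(c) = e^{0i}$. In the complex case, taking $w = iv$ produces $\mathfrak{u}(1)$-block $2i|v|^{2} \neq 0$, so $\Omega|_{S}$ is a non-zero constant multiple of the area form of $S$; integrating gives $V(c) = e^{\lambda i A(c)}$ for a universal constant $\lambda$, which turns out to be $\tfrac{1}{2}$ once the conformal factor $|f_{*}v| = \tfrac{1}{2}|v|$ from the footnote is used to pass between the natural metric on $\bbc H^{m}$ and the metric pulled back from $U(1,m)/(U(1)\times U(m))$.

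The main obstacle I anticipate is simply the bookkeeping of normalizations: the formula $\Omega = -[\cdot,\cdot]_{\mathfrak{u}(1)}$ delivers the holonomy density with respect to the symmetric-space metric on the base, whereas the stated coefficient $\tfrac{1}{2}$ refers to the area measured on $\bbc H^{m}$ with its standard metric. Tracking the resulting factors of $2$ and $4$ through the conformal identification to land on exactly $\tfrac{1}{2}$ is routine but delicate, and is really the only quantitative step once the Lie-bracket computation has settled the dichotomy between the complex and isotropic cases.
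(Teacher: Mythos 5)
This statement is quoted verbatim from \cite{CL} and the paper supplies no proof of it, so there is no internal argument to measure your proposal against line by line; I can only assess your route on its own terms and against the technique the paper uses for its \emph{own} analogous results. Your outline is sound: the bracket computation $[X_v,X_w]_{\mathfrak{u}(1)}=v^{*}w-w^{*}v=2i\,\mathrm{Im}(v^{*}w)$ is correct, the dichotomy complex line versus totally real plane is the standard classification of complete totally geodesic surfaces in the rank-one symmetric space $\bbc H^{m}$, and for an abelian structure group the holonomy around a bounding curve is indeed the exponential of the integral of the curvature of the invariant connection, so the vanishing/non-vanishing of the $\mathfrak{u}(1)$-component of the bracket on $T_0S$ settles the trivial versus non-trivial alternative. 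This is a genuinely different route from the one the paper itself takes in Section \ref{special2} and Section \ref{sec} for its generalizations: there the author never invokes a curvature form, but instead writes down an explicit horizontal lift $\bar c(t)=\tilde\gamma(t)e^{\Psi(t)}$, derives a first-order ODE $\phi'(t)=\theta'(t)\sinh^{2}(r(t))$ for the fiber phase from the horizontality condition, and converts $\phi(1)-\phi(0)$ into an area by Stokes applied to the explicit primitive $\tfrac12\sinh^{2}(r)\,d\theta$. Your curvature argument is cleaner for $S^{1}$-bundles but does not survive the passage to the non-abelian $U(n)$ holonomy that is the paper's actual subject, which is precisely why the author works with explicit lifts.

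The one substantive gap is that the quantitative content of the theorem --- the coefficient $\tfrac12$ --- is asserted rather than derived. You reduce it to ``tracking factors of $2$ and $4$ through the conformal identification,'' but that bookkeeping \emph{is} the theorem: you would need to (i) fix the metric normalization on $\mathfrak{u}(1,m)$ (the paper uses $\langle A,B\rangle=\tfrac12\mathrm{Re}\,\mathrm{Tr}(A^{*}B)$, and the constant changes with this choice, as the discussion after Theorem 1.1 shows), (ii) evaluate $\Omega$ against an \emph{orthonormal} frame of $T_0S$ to get the ratio of $\Omega$ to the Riemannian area form of $S$ in the quotient metric, and (iii) only then apply the conformal factor $|f_{*}v|=\tfrac12|v|$, which rescales areas by $\tfrac14$. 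Without step (ii) carried out explicitly you cannot certify that the ``universal constant'' is $2$ before rescaling and hence $\tfrac12$ after, so as written the proof establishes the dichotomy and the form $V(c)=e^{\lambda iA(c)}$ but not the stated value of $\lambda$.
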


\bigskip
Consider $D_{n,m}= U(n,m)/\big(U(n) \times U(m)\big).$

\medskip
We generalize these results in the principal $U(n)$-bundle
$$U(n) \ra U(n,m)/U(m) \stackrel{\pi}{\rightarrow} D_{n,m}$$ 
over $D_{n,m}$ up to $M_{m \times n}(\bbc)$ 
for general positive integers $n,m \in \bbn,$ 
not only up to $U_{m,n}(\bbc),$ as follows: consider a left invariant metric on $U(n,m)$, related to the Killing-Cartan form, given by
\begin{align} \label{metric}
  \langle A,B \rangle 
   &= \tfrac{1}{2} \text{Re}\big(\text{Tr}(A^{*}B)\big), 
      \qquad A,B \in \mathfrak{u}(n,m),
\end{align}   
and  
the induced metric on $D_{n,m},$ which makes the natural projection 
$$\tilde{\pi}: U(n,m)  \lra D_{n,m}$$
a Riemannian submersion
and induces another Riemannain submersion
$$\pi: U(n,m)/U(m) {\longrightarrow} D_{n,m}.$$
Given a nontrivial $X \in M_{m \times n}(\bbc),$ let
$$
  \widehat{X} =
  \left(
        \begin{array}{cccc}
          O_n & X^* \\
           X  & O_m \\
        \end{array}
  \right)
  \quad  \text{and} \quad
  \widehat{iX} =
  \left(
        \begin{array}{cccc}
          O_n & -iX^* \\
          iX  & O_m \\
        \end{array}
  \right).
$$
Then for $W= \tfrac{1}{|\widehat{X}|} X,$ 
for $a,b \in \bbr$ and for $z=a+bi \in \bbc$
$$
  \widehat{zW}
  = 
  \widehat{\tfrac{a+bi}{ \big|\widehat{X}\big|} X}
  = 
  \tfrac{a}{ \big|\widehat{X}\big|} \widehat{X} +
  \tfrac{b}{ \big|\widehat{iX}\big|} \widehat{iX}
  = a \widehat{W} + b \widehat{iW}.
$$
Consider a complete surface 
$\tilde{S}=\{e^{\widehat{zW}} | z \in \bbc\}$ in $U(n,m).$
Then the map $z \mapsto e^{\widehat{zW}}: \bbc \ra \tilde{S}$ is a bijection.
For $g \in U(n,m),$
define \emph{a complex surface $S$ related to $(X,g)$} in $D_{n,m}$ by
$S=\tilde{\pi}(g\tilde{S}),$
which has a complex structure induced from a 2-dimensional subspace 
$
 \frakm' = \emph{Span}_{\bbr} \{\widehat{X},\widehat{iX}\} 
         = \{\widehat{zW} | z= x+iy \text{ for } x,y\in \bbr\} 
                 \subset \frakm  \subset \mathfrak{u}(n,m), 
$ 
where 
$\frakm$ is the orthogonal complement of 
$\mathfrak{u}(n) + \mathfrak{u}(m).$

\medskip

In case of  
$$U(1) \ra U(1,m)/U(m) \ra U(1,m)/(U(1) \times U(m))$$
and in case of
$$U(n) \ra U(n,1)/U(1) \ra U(n,1)/(U(n) \times U(1)),$$
the Lie algebra generated by $\{\widehat{X}, \widehat{iX}\}$ from
$X \in M_{m \times n}(\bbc),$ either $m=1$ or $n=1,$ inducing a complex surface $S$ in the base space, 
produces a 3-dimensional Lie subgroup $\widehat{G}$ of $U(n,m),$ 
which is isomorphic to $SU(1,1),$ 
and a bundle structure isomorphic to
$$S(U(1)\times U(1)) \lra SU(1,1) \lra SU(1,1)/S(U(1)\times U(1)).$$
Then, under the notaion $A(c)$ of the induced area of the region surrounded
by $c$ on the surface $S$   from the metric on the base space,  
this enables us to guess the pull-back bundle
and the holonomy displacement 
     \footnote{
          As explained in the footnote to $A(c)$ in Theorem \ref{pull-back},
          $V(c)$ in \cite{CL} is given by $e^{\tfrac{1}{2}A(c)i}.$
      }
$$
  V(c)=e^{2A(c)i}
$$ 
along a curve $c$ in the base space \cite{CL},
which also enables us to guess its induced holonomy displacement in the original bundle from either $X^*X \in \bbr$ or $XX^* \in \bbr,$
from Lemma \ref{sing} and from Proposition  \ref{prop}. 
We deal with the latter one in Section  \ref{special2}.

But for general positive integers $n,m \in \bbn$ and 
for a general nontrivial $X \in M_{m \times n}(\bbc),$ 
the Lie algebra generated by $\{\widehat{X}, \widehat{iX}\}$ is not 3-dimensional.
Furthermore, the holonomy displacement depends not only on $X$  
but also on some 2-form of the complex surface $S$ related to it too heavily.
From now on, we consider two kinds of 2-forms on $S,$ the first one is related to $(X,g) \in M_{m\times n}(\bbc) \times U(n,m),$ defined in Definition \ref{def}, and the other one is induced from the metric on $S$ obtained from the metric on $D_{n,m},$ mentioned in Proposition \ref{prop}.

\medskip

To begin with, given $X \in M_{m \times n}(\bbc),$ 
think of 
$W= \tfrac{1}{|\widehat{X}|} X$ 
and
$\tilde{S}=\{e^{\widehat{zW}}|z \in \bbc\},$
which is one to one correspondent to  $S_0 := \tilde{\pi}(\tilde{S}).$
Refer to Remark \ref{bijection}.
And  for any $\tilde{g} \in U(n,m),$ let
$\mathbb{L}_{\tilde{g}}: D_{n,m} \ra D_{n,m}$ be the action of $\tilde{g},$ induced from the left multiplication of $\tilde{g}$ on $U(n,m),$ which is an isometry from 
$\mathbb{L}_{\tilde{g}} \circ\tilde{\pi}= \tilde{\pi} \circ L_{\tilde{g}}.$

\begin{definition} \label{def}
Think of a bundle $U(n)\times U(m) \ra U(n,m)\stackrel{\tilde{\pi}}\lra D_{n,m}.$
Given  $(X,g) \in M_{m \times n}(\bbc) \times U(n,m),$ 
consider $W= \tfrac{1}{|\widehat{X}|} X$ and
define a 2-form $\omega_{(X,g)}$ on a complex surface $S$ related to $(X,g)$ in $D_{n,m},$ called 
\emph{an area form $\omega_{(X,g)}$ related to} $(X,g)$ \emph{on} $S,$ by
$$
  \omega_{(X,g)}(\tilde{\pi}_*x, \tilde{\pi}_*y) = \emph{det}
        \left(
              \begin{array}{cccc}
                 \langle {L_{{g_1}^{-1}}}_*x, \widehat{W} \rangle 
                 &\langle {L_{{g_1}^{-1}}}_*y, \widehat{W} \rangle \\
                 \langle {L_{{g_1}^{-1}}}_*x, \widehat{iW} \rangle 
                 &\langle {L_{{g_1}^{-1}}}_*y, \widehat{iW} \rangle \\
              \end{array}
        \right)
$$
under the identification of the tangent space of $U(n,m)$ at the identity and its  Lie algebra $\mathfrak{u}(n,m),$
where $g_1 \in g\tilde{S}=\{g e^{z\widehat{W}} | z\in \bbc\}$ and both $x$ and $y$ are tangent to $g\tilde{S}$ at $g_1.$
\end{definition}

\begin{proposition} \label{prop}
Think of a bundle $U(n)\times U(m) \ra U(n,m)\stackrel{\tilde{\pi}}\lra D_{n,m}$
such that $\tilde{\pi}$ is a Riemannian submersion.
Given a nontrivial $X \in  M_{m \times n}(\bbc),$ 
consider 
a complete, complex surface 
$\tilde{S} = \{e^{\widehat{zW}} \big| z \in \bbc \}$ in $U(n,m)$
for $W= \tfrac{1}{|\widehat{X}|} X$ and
for 
$
  \widehat{W}
  = 
  \widehat{\tfrac{1}{ \big|\widehat{X}\big|} X}
  = 
  \tfrac{1}{ \big|\widehat{X}\big|} \widehat{X}.
$  
\\
\indent
\emph{(i)}
For a complex surface $S_0 = \tilde{\pi}(\tilde{S})$ 
related to $(X,e)$ in $D_{n,m},$ 
let $\omega_0$ be the 
area form $\omega_{(X,e)}$ related to $(X,e)$ on $S_0,$ where $e$ is the identity of $U(n,m).$
Consider
a coordinate system $(r,\theta)$ on $S_0$ induced from 
$$S_0 = \{e^{\widehat{r e^{i \theta}W}} \big| z=re^{i\theta} \in \bbc\}.$$
Then,  for the differntial $d=d_{S_0}$ on $S_0,$
$$
  \omega_0 \,=\, \omega_{(X,e)} 
           \,=\, d\Big(
                       \sum_{j=1}^{n}  \tfrac{1}{2}\sinh^2{(\sigma_j r)} \,
                        d\theta
                  \Big),                  
$$
where $\sigma_1 \ge \cdots \ge \sigma_n$ are the square roots of decreasingly ordered non-negative eigenvalues of $W^*W$ 
with $\sigma_1^2 + \cdots + \sigma_n^2 =1.$
(Refer to \emph{Lemma \ref{sing}} for more information on the eigenvalues.) 
\\
\indent
\emph{(ii)}
Given a complex surface $S$ in $D_{n,m},$ 
related to $(X,g) \in M_{m \times n}(\bbc) \times U(n,m),$
the area form $\omega_{(X,g)}$ related to $(X,g)$ on $S$ is given by
$$\omega_{(X,g)}= {\mathbb{L}_{g^{-1}}}^*\omega_0.$$
\\
\indent
\emph{(iii)}
The induced area form $\omega$ on $S$ from the metric on $D_{n,m}$ is given by
$$\omega = \mathbb{L}_{g^{-1}}^* \omega_1$$
and
\begin{align*}
  \omega_1 
         &= \frac{1}{2}\,
            \sqrt{\sum_{j=1}^n \sinh^2{(2\sigma_jr)}} \,
            dr\wedge d\theta,
\end{align*}
where $\omega_1$ is the induced area form  on $S_0$ from the metric on $D_{n,m}.$
\\
\indent
Especially, if the positive eigenvalues of $W^*W$ consist of a single real number 
with allowing the duplication, that is, 
$\sigma_1 = \cdots = \sigma_q >0 = \sigma_{q+1} = \cdots = \sigma_n,$
then 
$$
  \omega_1 = \omega_0
         = d\big(\tfrac{q}{2} \,\sinh^2\big(\tfrac{r}{\sqrt{q}}\big)d\theta\big)
$$
and $q \in \bbn$ is the algebraic multiplicity of the positive eigenvalue $\sigma_1^2$ of $W^*W.$
\end{proposition}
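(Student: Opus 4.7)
The plan is to work in a basis that block-diagonalizes the entire problem. Parametrize $\tilde{S}$ by $(r,\theta) \mapsto e^{\widehat{re^{i\theta}W}}$ so that $(\partial_r,\partial_\theta)$ is a frame on $S_0$. Take a singular value decomposition $W = V \Sigma U^{*}$ with $U \in U(n)$, $V \in U(m)$, and conjugate $\mathfrak{u}(n,m)$ by $Q = \mathrm{diag}(U,V)$. Since $Q$ is unitary and block-diagonal, this conjugation is an isometry under \eqref{metric} and preserves the splitting $\mathfrak{u}(n,m) = \mathfrak{m} \oplus (\mathfrak{u}(n)\oplus\mathfrak{u}(m))$. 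Under it, $\widehat{W}$ decomposes into $2\times 2$ blocks $C_j = \sigma_j\bigl(\begin{smallmatrix}0 & 1 \\ 1 & 0\end{smallmatrix}\bigr)$ sitting at indices $(j,\,n+j)$ (for $j=1,\dots,n$, with $\sigma_j = 0$ when $j > \min(m,n)$), and $\widehat{iW}$ into $\widetilde{C}_j = \sigma_j\bigl(\begin{smallmatrix}0 & -i \\ i & 0\end{smallmatrix}\bigr)$. The orthonormality of $\{\widehat W, \widehat{iW}\}$ and the normalization $\sum_j \sigma_j^{2} = 1$ both fall out from $|\widehat W|^{2} = \mathrm{Tr}(W^{*}W) = 1$.

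Next I would compute the two left-translated tangents at $g_1 = e^{\widehat{re^{i\theta}W}}$. Because $\widehat{re^{i\theta}W} = r\cos\theta\,\widehat W + r\sin\theta\,\widehat{iW}$ commutes with its $r$-partial, one immediately has $L_{g_1^{-1}*}\partial_r = \cos\theta\,\widehat W + \sin\theta\,\widehat{iW}$. For $\partial_\theta$, apply the right-logarithmic derivative identity $e^{-A}\,\partial_\theta e^{A} = \int_0^{1} e^{-uA}(\partial_\theta A)\,e^{uA}\,du$ block by block, with $A = B_j := \sigma_j r\bigl(\begin{smallmatrix}0 & e^{-i\theta}\\ e^{i\theta} & 0\end{smallmatrix}\bigr)$. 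A brief commutator calculation gives $[B_j, \partial_\theta B_j] = (\sigma_j r)^{2} D_j$ with $D_j = \mathrm{diag}(2i,-2i)$, and $[B_j, D_j] = 4\,\partial_\theta B_j$; thus the $\mathrm{ad}$-series closes on $\mathrm{span}\{\partial_\theta B_j, D_j\}$ and the integral evaluates to $e^{-B_j}\partial_\theta e^{B_j} = \tfrac{\sinh(2\sigma_j r)}{2\sigma_j r}\,\partial_\theta B_j - \tfrac{\cosh(2\sigma_j r) - 1}{4}\,D_j$. The decisive structural point is that $\partial_\theta B_j \in \mathfrak m$ (purely off-diagonal in the $(n,m)$-block sense) while $D_j \in \mathfrak{u}(n) \oplus \mathfrak{u}(m)$ (purely diagonal); this is the horizontal/vertical split needed for part (iii).

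For part (i), substitute these data into the determinant of Definition~\ref{def}. The $D_j$-contribution drops out since $\langle D_j, C_j\rangle = \langle D_j, \widetilde C_j\rangle = 0$; using $|C_j|^{2} = |\widetilde C_j|^{2} = \sigma_j^{2}$ and $\langle C_j, \widetilde C_j\rangle = 0$, the determinant collapses to $\omega_0(\partial_r,\partial_\theta) = \sum_{j=1}^{n} \sigma_j \sinh(\sigma_j r)\cosh(\sigma_j r)$. Recognizing the right side as $\partial_r \sum_j \tfrac12 \sinh^{2}(\sigma_j r)$ gives $\omega_0 = d\bigl(\sum_j \tfrac12 \sinh^{2}(\sigma_j r)\,d\theta\bigr)$. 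Part (ii) is a formal check from the definition: $\mathbb L_g \circ \tilde\pi = \tilde\pi \circ L_g$ and the change of base point $g_1 \mapsto g^{-1}g_1 \in \tilde S$ give $L_{(g^{-1}g_1)^{-1}*} \circ L_{g^{-1}*} = L_{g_1^{-1}*}$, leaving the determinant invariant and thus yielding $\omega_{(X,g)} = \mathbb L_{g^{-1}}^{*}\omega_0$.

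For part (iii), since $\tilde\pi$ is a Riemannian submersion, $|\tilde\pi_* v|_{D_{n,m}}^{2}$ equals the squared norm of the $\mathfrak m$-component of $L_{g_1^{-1}*}v$. From paragraph 2, $L_{g_1^{-1}*}\partial_r$ is already entirely horizontal with unit length, while the $\mathfrak m$-part of $L_{g_1^{-1}*}\partial_\theta$ is $\sum_j \tfrac{\sinh(2\sigma_j r)}{2\sigma_j r}\,\partial_\theta B_j$, whose squared norm (using $|\partial_\theta B_j|^{2} = (\sigma_j r)^{2}$ per block) sums to $\tfrac14 \sum_j \sinh^{2}(2\sigma_j r)$; the mixed term vanishes by block orthogonality. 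Hence the induced metric on $S_0$ in $(r,\theta)$ coordinates is $\mathrm{diag}\bigl(1,\tfrac14\sum_j \sinh^{2}(2\sigma_j r)\bigr)$, and its volume form is the stated $\omega_1$; isometry of $\mathbb L_g$ extends this to $\omega = \mathbb L_{g^{-1}}^{*}\omega_1$. The equal-eigenvalue case $\sigma_1 = \cdots = \sigma_q = 1/\sqrt q$ follows from $\sinh(2x) = 2\sinh(x)\cosh(x)$. The main technical obstacle is the iterated-commutator integration in paragraph 2; once its hyperbolic closed form is in hand, everything else reduces to linear algebra and bookkeeping of the block structure.
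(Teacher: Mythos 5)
Your proposal is correct and follows the same overall strategy as the paper: singular value decomposition of $W$, conjugation by the block-diagonal unitary to reduce everything to $2\times 2$ blocks indexed by the singular values, explicit computation of the left-translated frame $L_{g_1^{-1}*}\partial_r$, $L_{g_1^{-1}*}\partial_\theta$, and then the determinant for (i), the pullback identity for (ii), and the Gram determinant of the horizontal projections for (iii). The one place you genuinely diverge is the computation of $L_{g_1^{-1}*}\partial_\theta$: the paper differentiates the power series for $e^{\widehat{zW}}$ and multiplies by $e^{-\widehat{zW}}$ directly, whereas you use the Duhamel identity $e^{-A}\partial_\theta e^{A}=\int_0^1 e^{-uA}(\partial_\theta A)e^{uA}\,du$ together with the closed three-dimensional $\mathrm{ad}$-algebra spanned by $\partial_\theta B_j$ and $D_j$ in each block; this is arguably cleaner and makes the horizontal/vertical split of the answer structurally transparent rather than something read off after the fact. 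It is worth noting that your vertical component, $\mp i\sinh^2(\sigma_j r)$ on the diagonal, differs from the paper's displayed formula (whose diagonal blocks read $\mp\tfrac{i}{2}\cosh(2\sigma_j r)$) by the constant $\mathrm{diag}(-\tfrac{i}{2}I_n,\tfrac{i}{2}I_m)$; yours is the version that correctly vanishes at $r=0$, but since the discrepancy lies entirely in $\mathfrak{u}(n)+\mathfrak{u}(m)$ it affects neither the area-form determinant in (i) nor the horizontal norms in (iii), so both computations land on the same $\omega_0$ and $\omega_1$. The only items you gloss over are the final remark in the statement that $q$ equals the algebraic multiplicity of $\sigma_1^2$ (immediate from the SVD, which the paper records via the characteristic polynomial) and the precise justification that the mixed term in (iii) vanishes, which reduces to $|C_j|=|\widetilde C_j|$ and $\langle C_j,\widetilde C_j\rangle=0$ within each block; neither is a gap.
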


\medskip
\begin{theorem} \label{thm}
Consider 
a bundle
$U(n) \ra U(n,m)/U(m) \stackrel{\pi}\ra D_{n,m} $
such that $\pi$ is a Riemannian submersion.
Given a complex surface $S$ in $D_{n,m},$ 
related to $(X,g) \in  M_{m \times n}(\bbc) \times U(n,m),$
let $c$ be a smooth, simple, closed, oreintation-preserving curve on 
$S,$ 
parametrized by $0\leq t\leq 1$, and $\hat{c}$ its horizontal lift.
Then
the holonomy displacement $\hat{c}(1)= \hat{c}(0) \cdot  e^\Psi$ is given by the right action of $e^\Psi$ for some 
$
  \Psi \in \emph{Span}_{\bbr}\{i(X^*X)^k\}_{k=1}^{q} \subset \mathfrak{u}(n),
  \: q =\emph{rk}X,
$
such that
$$
\emph{Tr}(\Psi)= 2i \, \emph{Area}(c),
$$
where $\emph{rk}X$ is the rank of $X$ and $\emph{Area}(c)$ is the area of the region on the surface $S$ surrounded by $c$ 
with respect to the area form $\omega_{(X,g)}$ related to $(X,g)$ on $S.$
\\
\indent
(For more information on how to find a  concrete 
$\Psi \in \mathfrak{u}(n)$ not only for $n=1$ but also for $n>1,$ refer to \emph{Remark} \ref{remark} and \ref{remark_soln}.)

\end{theorem}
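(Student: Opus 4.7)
The plan is to exploit left-invariance and the singular-value decomposition to reduce the horizontal-lift ODE to an abelian one, and then to evaluate it by Stokes's theorem. First, because the left $U(n,m)$-action on $U(n,m)/U(m)$ commutes with the right $U(n)$-action and preserves horizontality (the connection being induced by the left-invariant metric), one may assume $g=e$. Next, writing $X = V\Sigma U^*$ via SVD and conjugating by $K = U \oplus V \in U(n) \times U(m)$, one may also assume that $W = \tfrac{1}{|\widehat{X}|}X$ equals a non-negative diagonal matrix $\Sigma$, since the bundle, the metric, and the area form $\omega_{(X,g)}$ all behave equivariantly under this conjugation.

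In this SVD frame $\widehat{\Sigma}$ and $\widehat{i\Sigma}$ decompose into mutually commuting $2\times 2$ blocks, the $j$-th block generating a copy of $\mathfrak{su}(1,1)_j$ acting on the coordinate pair $(e_j^{(n)}, e_j^{(m)})$; denote its Cartan generator by $R_j = \text{diag}(i,-i)$ in that block. I parametrize $\tilde{S}$ by polar coordinates as $\phi(r,\theta) = \bigoplus_j e^{r\sigma_j M(e^{i\theta})}$, where $M(z) = \bigl(\begin{smallmatrix}0 & \bar z \\ z & 0\end{smallmatrix}\bigr)$; since $M(e^{i\theta})^2 = I_2$ this gives the closed form $e^{r\sigma_j M(e^{i\theta})} = \cosh(\sigma_j r)\,I_2 + \sinh(\sigma_j r)\,M(e^{i\theta})$. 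A direct block-diagonal computation with these matrices then shows that $\phi^{-1}\partial_r\phi = \widehat{e^{i\theta}W} \in \mathfrak{m}$ (so radial curves are horizontal) and that
\begin{align*}
(\phi^{-1}\partial_\theta\phi)_{\mathfrak{h}} \;=\; -\sum_{j=1}^q \sinh^2(r\sigma_j)\, R_j,
\end{align*}
a sum involving only the mutually commuting collection $\{R_j\}_{j=1}^q$, where $\mathfrak{h} = \mathfrak{u}(n) \oplus \mathfrak{u}(m)$.

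For a horizontal lift $\tilde{c}(t) = \gamma(t) h(t)$ in the $\tilde{\pi}$-bundle with $\gamma(t) = \phi(r(t),\theta(t))$ and $h(t) \in U(n) \times U(m)$, horizontality translates into the ODE $h'(t) = -\xi_{\mathfrak{h}}(t)\, h(t)$ with $\xi := \gamma^{-1}\gamma'$; by the previous paragraph,
\begin{align*}
\xi_{\mathfrak{h}}(t) \;=\; -\sum_{j=1}^q \sinh^2\bigl(r(t)\sigma_j\bigr)\,\theta'(t)\,R_j,
\end{align*}
which takes values in an abelian subalgebra of $\mathfrak{h}$, so the ODE integrates without path-ordering to $h(1) = \exp\bigl(-\int_0^1 \xi_{\mathfrak{h}}(t)\,dt\bigr)$. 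Projecting to the $\pi$-bundle by extracting the $\mathfrak{u}(n)$-part yields $\Psi = i\sum_{j=1}^q E_{jj}^{(n)}\oint_c \sinh^2(r\sigma_j)\,d\theta$. Since $\sinh^2(r\sigma_j)$ depends only on $\sigma_j^2$ and vanishes at $\sigma_j = 0$, Lagrange interpolation expresses $\Psi$ as $i$ times a polynomial in $W^*W$ without constant term, so $\Psi \in \text{Span}_{\mathbb{R}}\{i(X^*X)^k\}_{k=1}^q$. Comparing with the Stokes evaluation $\int_D \omega_0 = \tfrac{1}{2}\sum_j \oint_c \sinh^2(r\sigma_j)\,d\theta$ coming from Proposition~\ref{prop}(i) gives
\begin{align*}
\text{Tr}(\Psi) \;=\; i\sum_j \oint_c \sinh^2(r\sigma_j)\,d\theta \;=\; 2i\,\text{Area}(c).
\end{align*}

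The main obstacle is the explicit block-diagonal computation of $(\phi^{-1}\partial_\theta\phi)_{\mathfrak{h}}$; once the SVD reduction is in place it reduces to a $2\times 2$ calculation in each $SU(1,1)_j$, but it is this step that delivers the crucial abelianness which makes the path-ordered holonomy collapse to an ordinary exponential and lets the theorem fall out of Stokes's theorem.
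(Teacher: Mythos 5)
Your proposal is correct and follows essentially the same route as the paper: reduce to $g=e$ using the isometric, fiber-preserving left action, diagonalize $W$ by singular value decomposition, compute the $\mathfrak{h}$-component of the Maurer--Cartan form of the parametrized surface to obtain the abelian horizontality ODE $\sum_k \sigma_j^{2k}\phi_k'(t)=\theta'(t)\sinh^2(\sigma_j r(t))$, integrate it without path-ordering, and evaluate the trace by Stokes's theorem against $\omega_0$ from Proposition \ref{prop}(i). The only cosmetic difference is that you solve for the diagonal entries of the holonomy directly and then recover membership in $\mathrm{Span}_{\mathbb{R}}\{i(X^*X)^k\}_{k=1}^{q}$ by Lagrange interpolation at $0$ and the distinct positive eigenvalues, whereas the paper posits the ansatz $\Psi(t)=i\sum_{k}\phi_k(t)(W^*W)^k$ from the outset and inverts the resulting Vandermonde system (Remark \ref{remark_soln}); these are equivalent.
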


\bigskip
\begin{remark}
One metric structure on $D_{n,m}$ induced from a Riemannian submersion $\tilde{\pi}: U(n,m) \ra D_{n,m}$ and the other one induced from a Riemannian submersion $\pi: U(n,m)/U(m) \ra D_{n,m}$, in fact, are same. See \emph{Section \ref{pre}}. 
\end{remark}

\bigskip
\begin{corollary} \label{corollary}
In addition to the hypothesis of Theorem \ref{thm}, assume that $X^*X$ has a single positive eigenvalue with algebraic multiplicity $q.$ Then, 
\begin{align*} 
  \Psi 
  &= A \;
     \emph{diag}[
                 \underbrace{
                             \tfrac{2i}{q}\emph{Area}(c),\cdots,
                             \tfrac{2i}{q}\emph{Area}(c),
                            }_{q \emph{ times}}
                 \underbrace{0,\cdots,0}_{(n-q) \emph{ times}}
                ]  \;
     A^* 
  \\             
  &= A \;
     \emph{diag}[
                 \underbrace{
                             \tfrac{2i}{q}A(c),\cdots,
                             \tfrac{2i}{q}A(c),
                            }_{q \emph{ times}}
                 \underbrace{0,\cdots,0}_{(n-q) \emph{ times}}
                ]  \;
     A^* 
\end{align*}
for some $A \in U(n),$ where $A(c)$ is the  area with respect to the induced metric on $S$ from the metric   on $D_{n,m},$ \emph{i.e.,} the area with respect to the induced area form $\omega$ on $S$ from the metric   on $D_{n,m}.$ 
\\
\indent
Especially, if $q=n,$ then $X \in U_{m,n}(\bbc), $  $S$ is totally geodesic and
$e^{\Psi}= e^{i \theta}I_n$ with 
$\theta = \frac{2}{n} \emph{Area}(c)= \frac{2}{n} A(c)$ 
\end{corollary}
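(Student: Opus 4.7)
The plan is to exploit the hypothesis that $X^{*}X$ has a single positive eigenvalue to collapse the $q$-dimensional span appearing in Theorem~\ref{thm} down to a one-dimensional subspace, and then read off the scalar from the trace normalization.

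First, since $X^{*}X$ is Hermitian positive semidefinite with a single positive eigenvalue $\lambda$ of algebraic multiplicity $q$, the spectral theorem furnishes $A \in U(n)$ with
$$
A^{*}(X^{*}X)A \;=\; \lambda P, \qquad P := \mathrm{diag}[\underbrace{1,\dots,1}_{q},\underbrace{0,\dots,0}_{n-q}].
$$
Because $P$ is a projection, $(X^{*}X)^{k} = \lambda^{k} A P A^{*}$ for every $k \ge 1$, so
$$
\mathrm{Span}_{\bbr}\{i(X^{*}X)^{k}\}_{k=1}^{q} \;=\; \bbr \cdot iAPA^{*},
$$
a one-dimensional real subspace of $\mathfrak{u}(n)$. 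Theorem~\ref{thm} then forces $\Psi = \mu\, iAPA^{*}$ for some $\mu \in \bbr$, and the trace condition $\mathrm{Tr}(\Psi) = 2i\,\mathrm{Area}(c)$, combined with $\mathrm{Tr}(iAPA^{*}) = iq$, gives $\mu = \tfrac{2}{q}\mathrm{Area}(c)$. Substituting back yields the first displayed expression for $\Psi$.

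Next, to upgrade $\mathrm{Area}(c)$ to $A(c)$, I would invoke the special case at the end of Proposition~\ref{prop}(iii): the assumption on the spectrum of $X^{*}X$ is exactly the condition $\sigma_{1}=\cdots=\sigma_{q}>0=\sigma_{q+1}=\cdots=\sigma_{n}$, under which that proposition asserts $\omega_{0}=\omega_{1}$. Pulling back by $\mathbb{L}_{g^{-1}}$ then gives $\omega_{(X,g)} = \omega$ on $S$, so $\mathrm{Area}(c) = A(c)$, producing the second displayed form of $\Psi$.

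Finally, for the subcase $q=n$ the hypothesis becomes $X^{*}X = \lambda I_{n}$, i.e.\ $X \in U_{m,n}(\bbc)$ and $P = I_{n}$, so the formula collapses to $\Psi = i\theta I_{n}$ with $\theta = \tfrac{2}{n}\mathrm{Area}(c)$, and $e^{\Psi} = e^{i\theta I_{n}} = e^{i\theta} I_{n}$. The only step that is not pure algebra is the remaining assertion that $S$ is totally geodesic in this scalar case; I would read this off from the structural theory of $D_{n,m}$, paralleling the Grassmannian argument in Theorem~1.1 from \cite{BC}, since when $W^{*}W$ is a scalar the two-parameter family generated by $\widehat{W}$ and $\widehat{iW}$ sweeps out a symmetric two-plane in the base. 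This last point is the only conceptual obstacle; everything else is a direct consequence of the earlier theorem and proposition.
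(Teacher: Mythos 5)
Your derivation of the displayed formula for $\Psi$ is the paper's own argument: both reduce $\mathrm{Span}_{\bbr}\{i(X^*X)^k\}_{k=1}^{q}$ to the one-dimensional space spanned by $iAPA^*$ via the spectral/singular-value decomposition, solve for the scalar from $\mathrm{Tr}(\Psi)=2i\,\mathrm{Area}(c)$, and obtain $\mathrm{Area}(c)=A(c)$ from the identity $\omega_0=\omega_1$ in Proposition \ref{prop}(iii) pulled back by $\mathbb{L}_{g^{-1}}$. The collapse to $e^{\Psi}=e^{i\theta}I_n$ when $q=n$ is likewise identical.

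The one place where you have not actually given a proof is the assertion that $S$ is totally geodesic, which you defer to ``the structural theory of $D_{n,m}$'' by analogy with the compact Grassmannian case. The paper closes this concretely: it sets
$$
\widehat{V}=\left(\begin{array}{cc} iW^*W & O\\ O & -iWW^*\end{array}\right),
$$
observes that when $W^*W=\tfrac{1}{n}I_n$ the set $\{\sqrt{n}\,\widehat{W},\sqrt{n}\,\widehat{iW},n\widehat{V}\}$ closes into a three-dimensional Lie algebra with
$[\sqrt{n}\,\widehat{W},\sqrt{n}\,\widehat{iW}]=2n\widehat{V}$, $[n\widehat{V},\sqrt{n}\,\widehat{W}]=-2\sqrt{n}\,\widehat{iW}$ and $[n\widehat{V},\sqrt{n}\,\widehat{iW}]=2\sqrt{n}\,\widehat{W}$, so that $\mathfrak{m}'=\mathrm{Span}_{\bbr}\{\widehat{X},\widehat{iX}\}$ satisfies $[[\mathfrak{m}',\mathfrak{m}'],\mathfrak{m}']\subset\mathfrak{m}'$, and then invokes Proposition \ref{affine} (the Lie-triple-system criterion for complete totally geodesic submanifolds of a symmetric space) together with the fact that $\mathbb{L}_g$ is an isometry carrying $S_0$ to $S$. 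Your phrase ``symmetric two-plane'' is exactly this Lie triple condition, but the bracket computation is the substance of the claim: it works precisely because $W^*WW^*$ is a scalar multiple of $W^*$ under the hypothesis, and for a general $X$ the bracket $[\widehat{V},\widehat{W}]$ does not land back in $\mathfrak{m}'$. So this step needs to be carried out rather than cited by analogy; once it is, your proof agrees with the paper's.
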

\begin{proof}
From hypothesis, for $W = \tfrac{1}{|\widehat{X}|}X,$ $W^*W$ also has a single positive eigenvalue with algebraic multiplicity $q.$ Call its positive square root $\sigma.$ Then, under the notation of Lemma \ref{sing}, we get, for some $A \in U(n),$
\begin{align*}
  W^*W
  &= A \Sigma^* \Sigma A^* \\
  &= A \;
     \text{diag}[
                 \underbrace{\sigma^2,\cdots,\sigma^2,}_{q \text{ times}}
                 \underbrace{0,\cdots,0}_{(n-q) \text{ times}}
                ]  \;
     A^* 
  \\
  &= A \;
     \text{diag}[
                 \underbrace{
                             \tfrac{1}{q},\cdots,\tfrac{1}{q},
                            }_{q \text{ times}}
                 \underbrace{0,\cdots,0}_{(n-q) \text{ times}}
                ]  \;
     A^* 
\end{align*}
from Proposition \ref{prop}. Then, Theorem \ref{thm} and Proposition \ref{prop} say that
$$\Psi \in \text{Span}\{X^*X\} = \text{Span}\{W^*W\}$$
with
$$\text{Tr}(\Psi) = 2i \text{Area}(c)= 2i A(c),$$
which proves the former part of Corollary. 
\\
\indent
For the latter one, assume $q=n.$ Then the similar arguments as those for
$W^*W = A \Sigma^* \Sigma A^* $ before show that 
\begin{align} \label{eqn_norm}
  X^*X =\tilde{A}(\lambda I_n)\tilde{A} = \lambda I_n \in U_{m,n}(\bbc)
\end{align}
for some $\lambda >0$ and for some $\tilde{A} \in U(n).$ 
And $q=n$ implies
$$
  \Psi 
  = A \Big(\tfrac{2i}{n} \text{Area}(c) I_n \Big) A^*
  = \tfrac{2i}{n} \text{Area}(c) I_n
  = i \theta I_n
$$
for $\theta =\tfrac{2}{n} \text{Area}(c) $ 
and for some $A \in U(n),$
which gives
$$e^{\Psi} = e^{i \theta I_n} = e^{i \theta} I_n.$$
Furthermore, note that
$$
  W= \tfrac{1}{\sqrt{n \lambda}} X 
  \quad \text{and} \quad
  W^*W = \tfrac{1}{n} I_n
$$
from equation (\ref{eqn_norm}).
Then for
\begin{align*}
  \widehat{V} 
  \!=\!
  \left(
        \begin{array}{cccc}
           iW^*W & O\\
           O & -iWW^* \\
        \end{array}
  \right)\!
  \!=\!
  \left(
        \begin{array}{cccc}
           \tfrac{i}{n} I_n & O\\
           O & -iWW^* \\
        \end{array}
  \right)\!,
\end{align*}
a set $\{\widehat{X}, \widehat{iX}, \widehat{V}\},$ or
$$
  \{
    \tfrac{1}{\sqrt{\lambda}}\widehat{X}, 
    \tfrac{1}{\sqrt{\lambda}}\widehat{iX}, 
    n \widehat{V}
  \}
  =
  \{\sqrt{n}\,\widehat{W}, \sqrt{n}\,\widehat{iW}, n\widehat{V}\}
$$ 
generates a 3-dimensional Lie algebra with
$$
  [\sqrt{n}\,\widehat{W},\sqrt{n}\,\widehat{iW}] = 2 n\widehat{V}, \quad
  [n\widehat{V},\sqrt{n}\,\widehat{W}] = -2 \sqrt{n}\,\widehat{iW} 
  \text{ and } 
  [n\widehat{V},\sqrt{n}\,\widehat{iW}] = 2 \sqrt{n}\,\widehat{W}.
$$
Since each action of $g \in U(n,m)$, $\mathbb{L}_g : D_{n,m} \ra D_{n,m}$ is an isometry from 
$\mathbb{L}_g \circ \tilde{\pi} = \tilde{\pi} \circ L_g$
and $S = \mathbb{L}_g(S_0),$ Proposition \ref{affine} says that $S$ is totally geodesic.
\end{proof}

\bigskip
\begin{remark}
For general $q=1, \cdots, n,$ we can show that $S$ in 
\emph{Corollary \ref{corollary}} is totally geodesic by using  singular value decomposition: under the notation of \emph{Lemma \ref{sing}}, we get, for some $B \in U(m),$
\begin{align*}
  WW^*
  &= B \Sigma \Sigma^* B^* \\
  &= B \;
     \emph{diag}[
                 \underbrace{\sigma^2,\cdots,\sigma^2,}_{q \text{ times}}
                 \underbrace{0,\cdots,0}_{(m-q) \text{ times}}
                ]  \;
     B^* 
  \\
  &= B \;
     \emph{diag}[
                 \underbrace{
                             \tfrac{1}{q},\cdots,\tfrac{1}{q},
                            }_{q \text{ times}}
                 \underbrace{0,\cdots,0}_{(m-q) \text{ times}}
                ]  \;
     B^* .
\end{align*}
If we let
$$
  \Omega =   
        \left(
              \begin{array}{cccc}
                A & O \\
                O  &  B \\
              \end{array}
        \right)
$$
and 
$$
  \widehat{V} =   
        \left(
              \begin{array}{cccc}
                iW^*W & O \\
                O  &  -iWW^* \\
              \end{array}
        \right),
$$
then by using singular value decomposition, we get
a set $\{\widehat{X}, \widehat{iX}, \widehat{V}\},$ or
$$
  \{\sqrt{q}\,\widehat{W}, \sqrt{q}\,\widehat{iW}, q\widehat{V}\}
$$ 
generates a 3-dimensional Lie algebra with
$$
  [\sqrt{q}\,\widehat{W},\sqrt{q}\,\widehat{iW}] = 2 q\widehat{V}, \quad
  [q\widehat{V},\sqrt{q}\,\widehat{W}] = -2 \sqrt{q}\,\widehat{iW} 
  \text{ and } 
  [q\widehat{V},\sqrt{q}\,\widehat{iW}] = 2 \sqrt{q}\,\widehat{W},
$$
so \emph{Proposition \ref{affine}} says that $S$ is totally geodesic.
\end{remark}

\bigskip

\begin{remark} \label{remark}
$\Psi$ in \emph{Theorem \ref{thm}} is obtained by a solution of a system of first order linear differential equations:
for $t \in [0,1],$
consider a curve $z(t) = r(t)e^{i\theta(t)}$ in $\bbc$ and another one $\tilde{\gamma}(t)$ in $\tilde{S} \subset U(n,m)$ such that 
$\tilde{\gamma}(t)=e^{\widehat{z(t)W}}$ and that 
$\tilde{\pi} \circ L_g \circ \tilde{\gamma} = c,$ 
where $W= \tfrac{1}{\big|\widehat{X}\big|}X$
and $L_g: U(n,m)\ra U(n,m)$ is an isometry by the left multiplication of 
$g \in U(n,m).$
For a curve $\Psi(t) \in \mathfrak{u}(n),$ given by 
$$
  \Psi(t) =   
        \left(
              \begin{array}{cccc}
                i \sum_{k=1}^{q} \phi_{k}(t)(W^*W)^{k} & O \\
                O  &  O_{m} \\
              \end{array}
        \right)
  \in \mathfrak{u}(n),
$$
let $\hat{c}(t) = \big(g\,\tilde{\gamma}(t) e^{\Psi(t)}\big)\, U(m)$
be a horizontal lift of $c(t),$
where $\phi_k(t)$ is to be determined for each $k=1, \cdots q.$
Then $\Psi$ is given by $\Psi = \Psi(1) - \Psi(0).$
Furthermore, for some $A \in U(n)$ and $B \in U(m)$ 
from \emph{Lemma \ref{sing}}, we will get
$$
  \Psi(t)
  =
  \Omega \:
  \emph{diag}\Big[       
                   i \sum_{k=1}^{q} \sigma_1^{2k}\phi_k(t), 
                   \: \cdots, \: 
                   i \sum_{k=1}^{q} \sigma_{q}^{2k}\phi_k(t),
                   \underbrace{0, \: \cdots, 0}_{(n-q) \emph{ times}}, 
                   \underbrace{0, \: \cdots, 0}_{m \emph{ times}}
              \Big] \:
  \Omega^*,
$$
where
\begin{align*}
  \Omega
  =   
        \left(
              \begin{array}{cccc}
                A & O \\
                O & B \\
              \end{array}
        \right), 
\end{align*}
and  a system of a first order linear differential equations
\begin{align} \label{horizontal_condition}
  \sum_{k=1}^{q}\sigma_j^{2k}\phi_k'(t)
  = \theta'(t) \sinh^2(\sigma_jr(t)),
  \qquad j=1, \cdots, q.
\end{align}

Refer to \emph{Section \ref{sec}} to see why this system comes. 
And for the concrete expression of $\Psi(t),$ refer to \emph{Remark} \ref{remark_soln}.
\end{remark}

\medskip
\begin{remark} \label{remark_soln}
For  simplicity, we can regard $\Psi(t)$ in \emph{Remark \ref{remark}} as
$$
  \Psi(t) =  i \sum_{k=1}^{q} \phi_{k}(t)(W^*W)^{k} 
  \in \mathfrak{u}(n).
$$
Then we get
$$
  \Psi(t)
  =
  A \:
  \emph{diag}\Big[       
                   i \sum_{k=1}^{q} \sigma_1^{2k}\phi_k(t), 
                   \: \cdots, \: 
                   i \sum_{k=1}^{q} \sigma_{q}^{2k}\phi_k(t),
                   \underbrace{0, \: \cdots, 0}_{(n-q) \emph{ times}}
              \Big] \:
  A^*.
$$

If 
$
  \{ \sigma_1, \cdots, \sigma_q\} = \{ \sigma_{j_1}, \cdots, \sigma_{j_p}\}
$
with $\sigma_{j_1} > \cdots >\sigma_{j_p},$
then
for uniquely determined $(n \times n)$-matrices $I_{j_l}$'s from the equation
$
\emph{diag}[\sigma_1^2, \cdots, \sigma_q^2, 0, \cdots, 0]
=\sum_{l=1}^p {\sigma_{j_l}^2 I_{j_l}},
$
we have
\begin{align*}
W^*W
&= A \,
   \emph{diag}[
               \sigma_1^2, \cdots, \sigma_q^2, 
               \underbrace{0, \cdots, 0}_{(n-q) \emph{ times}} 
              ] \,
   A^*  \\
&= A \big( \sum_{l=1}^p {\sigma_{j_l}^2 I_{j_l}} \big) A^*  \\
&= \sum_{l=1}^p {\sigma_{j_l}^2 A I_{j_l} A^*}.
\end{align*}
Note
\begin{align*}
(W^*W)^k 
&= A \,
   \emph{diag}[
               \sigma_1^{2k}, \cdots, \sigma_q^{2k}, 
               \underbrace{0, \cdots, 0}_{(n-q) \emph{ times}} 
              ] \,
   A^*  \\
&= A \big( \sum_{l=1}^p {\sigma_{j_l}^{2k} I_{j_l}} \big) A^*  \\
&= \sum_{l=1}^p {\sigma_{j_l}^{2k} A I_{j_l} A^*}
\end{align*}
for $k \in \{1, \cdots, q\},$ which says that
$\emph{Span}_{\bbr}\{i(X^*X)^k\}_{k=1}^q$ in 
\emph{Theorem \ref{thm}} has a dimension $p$ and 
a basis $\{ A I_{j_l} A^* \}_{l=1}^p.$ 
For $k=1, \cdots, p,$
this system can be regarded as
\begin{align*}
   \left(
         \begin{array}{cccc}
           \sigma_{j_1}^{2\cdot 1}  &  \cdots & \sigma_{j_p}^{2\cdot1} \\
           \vdots                   &  \ddots & \vdots \\
           \sigma_{j_1}^{2p}  &  \cdots & \sigma_{j_p}^{2p} \\
         \end{array}
   \right)
   \left(
         \begin{array}{cccc}
           AI_{j_1}A^* \\
           \vdots  \\
           AI_{j_p}A^* \\
         \end{array}
   \right)
   =
   \left(
         \begin{array}{cccc}
           (W^*W)^1\\
           \vdots  \\
           (W^*W)^p\\
         \end{array}
   \right).
\end{align*}
Then for the invertible $(p \times p)$-matrix D on the left hand side,
\begin{align*}
   D=
   \left(
         \begin{array}{cccc}
           \sigma_{j_1}^{2\cdot 1}  &  \cdots & \sigma_{j_p}^{2\cdot1} \\
           \vdots                   &  \ddots & \vdots \\
           \sigma_{j_{1}}^{2p}  &  \cdots & \sigma_{j_p}^{2p} \\
         \end{array}
   \right),
\end{align*}
we get 
\begin{align*}
   \left(
         \begin{array}{cccc}
           AI_{j_1}A^* \\
           \vdots  \\
           AI_{j_p}A^* \\
         \end{array}
   \right)
   =
   D^{-1}
   \left(
         \begin{array}{cccc}
           (W^*W)^1\\
           \vdots  \\
           (W^*W)^{p}\\
         \end{array}
   \right),
\end{align*}
which says that $\{(W^*W)^k\}_{k=1}^p$ is also a basis.
From this basis, the curve $\Psi(t)$ may be reconstructed by
\begin{align*}
\Psi(t)
&= \sum_{k=1}^p {i \psi_k(t) (W^*W)^k} \\
&= A\,
  \Big( 
      \sum_{l=1}^p \,
         \big(
             \,\sum_{k=1}^p  {i \psi_k(t) \sigma_{j_l}^{2k}\,
         \big) 
         \, I_{j_l}} 
   \Big)\, 
   A^*,
\end{align*}
which change a system of first order differential equations \emph{(\ref{horizontal_condition})} for constructing a horizontal curve condition into another one
$$
  \sum_{k=1}^{p}\sigma_{j_l}^{2k}\psi_k'(t)
  = \theta'(t) \sinh^2(\sigma_{j_l}r(t)),
  \qquad l=1, \cdots, p,
$$
whose intial value is given by $\Psi(0)$ satisfying
$\hat{c}(0) = \big(g\,\tilde{\gamma}(0) e^{\Psi(0)}\big)\, U(m).$
This system can be rewritten as
\begin{align*}
   \left(
         \begin{array}{cccc}
           \sigma_{j_1}^{2\cdot 1}  &  \cdots & \sigma_{j_1}^{2p} \\
           \vdots                   &  \ddots & \vdots \\
           \sigma_{j_p}^{2\cdot 1}  &  \cdots & \sigma_{j_p}^{2p} \\
         \end{array}
   \right)
   \left(
         \begin{array}{cccc}
           \psi_1'(t) \\
           \vdots  \\
           \psi_p'(t) \\
         \end{array}
   \right)
   =
   \left(
         \begin{array}{cccc}
           \theta'(t) \sinh^2(\sigma_{j_1}r(t))\\
           \vdots  \\
           \theta'(t) \sinh^2(\sigma_{j_p}r(t))\\
         \end{array}
   \right).
\end{align*}
Then for the invertible $(p \times p)$-matrix $C$ on the left hand side,
\begin{align*}
   C=
   \left(
         \begin{array}{cccc}
           \sigma_{j_1}^{2\cdot 1}  &  \cdots & \sigma_{j_1}^{2p} \\
           \vdots                   &  \ddots & \vdots \\
           \sigma_{j_{p}}^{2\cdot 1}  &  \cdots & \sigma_{j_p}^{2p} \\
         \end{array}
   \right)
   =D^T,
\end{align*}
we get a solution
\begin{align*}
   \left(
         \begin{array}{cccc}
           \psi_1(t) \\
           \vdots  \\
           \psi_p(t) \\
         \end{array}
   \right)
   =
   \int{
         C^{-1}
   \left(
         \begin{array}{cccc}
           \theta'(t) \sinh^2(\sigma_{1}r(t))\\
           \vdots  \\
           \theta'(t) \sinh^2(\sigma_{p}r(t))\\
         \end{array}
   \right)
   dt.
      }
\end{align*}
Especially, if $X \in U_{m,n}(\bbc),$ then 
$p=1,$  $\sigma_1 = \cdots = \sigma_n,$ 
$$
  \emph{Span}_{\bbr}\{i(X^*X)^k\}_{k=1}^q
  =\{i\mu I_{n\times n}\,|\, \mu \in \bbr\},
$$
and
$$
\psi_1(t) 
= \int {\tfrac{1}{\sigma_1^2} \,\theta'(t) \sinh^2(\sigma_1 r(t)) dt},
$$
so
$\Psi(t)$ is given by
\begin{align*}
  \Psi(t)
  &=
  A \,(i \sigma_1^2 \psi_1(t) I_{n \times n})\, A^*
  \\
  &=
  i \sigma_1^2 \psi_1(t) I_{n \times n}
  \\
  &=
  \Psi(0) +
  i\Big(\int_0^t {\theta'(u) \sinh^2(\sigma_1 r(u)) du}\Big) I_{n\times n}.
\end{align*}
\end{remark}

\bigskip
\begin{remark} \label{bijection}
The singular value decomposition of $e^{\widehat{zW}},$
consisting of $\Omega, \Gamma_n(z), \Gamma_m(z)$ and $\Lambda(z),$
in \emph{Section \ref{sec_prop}} shows that 
$z \mapsto e^{\widehat{zW}}: \bbc \ra \tilde{S}$ is a bijection. Furthermore, it also shows that the restriction of $\tilde{\pi}$ on $\tilde{S},$ 
$$\tilde{\pi}: \tilde{S} \ra S_0$$
is injective(, so bijective): to show it, assume that 
$\tilde{\pi}(e^{\widehat{z_1W}}) = \tilde{\pi}(e^{\widehat{z_2W}})$
for $z_j=r_j e^{i \theta_j}, \, j=1,2,$ with $r_j \geq 0.$
Then, we get 
$$e^{\widehat{-z_1W}} e^{\widehat{z_2W}} \in U(n) \times U(m)$$
and the calculation through their singular value decomposition gives
\begin{align*}
  e^{\widehat{-z_1W}} e^{\widehat{z_2W}} 
  &=    \Omega
        \left(
              \begin{array}{cccc}
                \Gamma_n(z_1) & \Lambda(z_1)^* \\
                \Lambda(z_1)  & \Gamma_m(z_1) \\
              \end{array}
        \right)^{-1} 
        \left(
              \begin{array}{cccc}
                \Gamma_n(z_2) & \Lambda(z_2)^* \\
                \Lambda(z_2)  & \Gamma_m(z_2) \\
              \end{array}
        \right) 
        \Omega^*
  \\      
  &=    \Omega
        \left(
              \begin{array}{cccc}
                \Gamma_n(z_1) & -\Lambda(z_1)^* \\
                -\Lambda(z_1)  & \Gamma_m(z_1) \\
              \end{array}
        \right) 
        \left(
              \begin{array}{cccc}
                \Gamma_n(z_2) & \Lambda(z_2)^* \\
                \Lambda(z_2)  & \Gamma_m(z_2) \\
              \end{array}
        \right) 
        \Omega^*,
\end{align*}
and so, from  $\Omega \in U(n) \times U(m),$
\begin{align*}
        \left(
              \begin{array}{cccc}
                \Gamma_n(z_1) & -\Lambda(z_1)^* \\
                -\Lambda(z_1)  & \Gamma_m(z_1) \\
              \end{array}
        \right) 
        \left(
              \begin{array}{cccc}
                \Gamma_n(z_2) & \Lambda(z_2)^* \\
                \Lambda(z_2)  & \Gamma_m(z_2) \\
              \end{array}
        \right) 
        \in U(n) \times U(m),  
\end{align*}
whose $(1, n+1)$-element is
\begin{tiny}
\begin{align*}
  0
  &= 
   \Big(
       \cosh(\sigma_1r_1)\, 
       \underbrace{0\cdots 0}_{(n-1) \emph{ times}} \:\: 
       -\!e^{-i\theta_1}\sinh(\sigma_1r_1) \, 
       \underbrace{0\cdots 0}_{(m-1) \emph{ times}}
   \Big)
   \left(
         \begin{array}{cccc}
           e^{-i\theta_2}\sinh(\sigma_1r_2) \\
           0 \\
           \vdots  \\
           0 \\
           \cosh(\sigma_1r_2) \\
           0\\
           \vdots  \\
           0 \\
         \end{array}
   \right)
   \\
   &= e^{-i\theta_2} \cosh(\sigma_1r_1) \sinh(\sigma_1r_2)
      - e^{-i\theta_1} \cosh(\sigma_1r_2) \sinh(\sigma_1r_1)
   \\
   &= \big(
         \cos{\theta_2} \cosh(\sigma_1r_1) \sinh(\sigma_1r_2)
          - \cos{\theta_1} \cosh(\sigma_1r_2) \sinh(\sigma_1r_1)
      \big)
      \\
      & \hspace{0.5cm}
      -i   
      \big(
         \sin{\theta_2} \cosh(\sigma_1r_1) \sinh(\sigma_1r_2)
          - \sin{\theta_1} \cosh(\sigma_1r_2) \sinh(\sigma_1r_1)
      \big).
\end{align*}
\end{tiny}
Then 
\begin{align} 
     \label{inj_1}
     \cos{\theta_2} \cosh(\sigma_1r_1) \sinh(\sigma_1r_2)
      &= \cos{\theta_1} \cosh(\sigma_1r_2) \sinh(\sigma_1r_1),
      \\
      \label{inj_2}
      \sin{\theta_2} \cosh(\sigma_1r_1) \sinh(\sigma_1r_2)
      &= \sin{\theta_1} \cosh(\sigma_1r_2) \sinh(\sigma_1r_1),
\end{align}
so from $\cos^2\theta + \sin^2\theta =1,$
$$
\cosh^2(\sigma_1r_1) \sinh^2(\sigma_1r_2)
= \cosh^2(\sigma_1r_2) \sinh^2(\sigma_1r_1),
$$
which means either 
\begin{align*}
0
&=
\cosh(\sigma_1r_1) \sinh(\sigma_1r_2) - \cosh(\sigma_1r_2) \sinh(\sigma_1r_1)
\\
&= \sinh(\sigma_1(r_2-r_1))
\end{align*}
or
\begin{align*}
0
&=
\cosh(\sigma_1r_1) \sinh(\sigma_1r_2) + \cosh(\sigma_1r_2) \sinh(\sigma_1r_1)
\\
&= \sinh(\sigma_1(r_2+r_1)).
\end{align*}
Thus we get $r_1 = r_2 \geq 0.$
If both of them equals to $0,$ then $z_1=z_2=0.$
If $r_1=r_2 >0,$ then the equations (\ref{inj_1}) and (\ref{inj_2}) say that
$$
  \cos{\theta_1}=\cos{\theta_2} 
  \quad \text{and} \quad
  \sin{\theta_1}=\sin{\theta_2}, 
$$
which gives  $z_1 = r_1 e^{i \theta_1} = r_2 e^{i \theta_2} = z_2.$
\end{remark}

\bigskip

\section{ Preliminaries } \label{pre}
Given a submersion $pr: (M, g_M) \ra (B, g_B),$ 
the \mbox{\emph{vertical distribution}} $\mathcal{V}$ and the \mbox{\emph{horizontal distribution}} $\mathcal{H}=\mathcal{V}^{\perp}$ are defined to be the kernel of $pr_*$ and its orthogonal complement, respectively. And $pr$ is said to be \mbox{\emph{Riemannian}}  if $|pr_*x| = |x|$ for all $x \in \mathcal{H}.$ \cite{GW}

\medskip

Let $K$ be a subgroup of the isometry group of a Riemannian manifold $M,$ and suppose that all orbits have the same type, that is, any two are equivaiantly diffeormorphic. Then there exits a differentiable structure on $M/K$ with a Riemannian metric for each the natural projection $\pi: M \ra M/K$ is a Riemannian submersion. \cite{GW}

\medskip
Given a Lie group $G$ with a left-invariant metric 
and given a subgroup $H$ with the right multiplication $R_h$ an isometry for each $h \in H,$ the space of left cosets $G/H$ can be endowed with the metric for which the canonical projection $\pi: G \ra G/H$ is a Riemannain submersion. If we denote by $\mathbb{L}_g: G/H \ra G/H$ the action of $g \in G,$ 
then $\mathbb{L}_g \circ \pi = \pi \circ L_g$ and so $\mathbb{L}_g$ is an isometry of $G/H.$ \cite{GW}

\bigskip
Recall that
\begin{align*}
  G=U(n,m) 
  &= 
  \{ \Phi \in GL_{n+m}(\bbc) | 
         \Phi^* \: \Lambda^{n} _{m} \: \Phi \: =  \: \Lambda^{n} _{m}
  \}    \\
  &= 
  \{ \Phi \in GL_{n+m}(\bbc) \, \big| \, 
         F(\Phi v, \Phi w ) = F(v,w), \, v,w \in \bbc ^{n+m}
  \}
\end{align*}
and that
$D_{n,m} := U(n,m)/\left(U(n) \times U(m)\right)$ can be regarded as the set of $n$-dimensional subspaces $V$ of $\bbc^{n+m}$ such that $F(v,v) \le 0$ for every $v \in V$ \cite{KN}, 
where 
$F : \bbc ^{n+m} \rightarrow \bbc$ is an Hermitian form defined by
\begin{align*}
  F(v,w) & = v^* \, \Lambda^n _{m} \, w \\
         & = -\sum_{k=1}^{n} \bar{v}_k w_k + \sum_{s=n+1}^{n+m}\bar{v}_s w_s
\end{align*}         
for column vectors $v, w \in \bbc^{n+m},$  
$$
  \Lambda^{n} _{m} =
  \left(
        \begin{array}{cccc}
          -I_n & O_{n \times m} \\
          O_{m \times n} & I_m \\
        \end{array}
  \right).
$$

\bigskip

\bigskip
Consider the following canonical decomposition of the Lie algebra $\mathfrak{u}(n,m)$ of $G=U(n,m)$:
$$\mathfrak{u}(n,m) = \mathfrak{h} + \mathfrak{m},$$
where
$$
  \mathfrak{h}
  =\mathfrak{u}(n)+ \mathfrak{u}(m)
  =
  \left\{
        \left(
              \begin{array}{cccc}
                A & O_{n \times m} \\
                O_{m \times n} & B \\
              \end{array}
        \right)
        \:  : \:
        A \in \mathfrak{u}(n),\:  B \in \mathfrak{u}(m)  
  \right\}
$$
and
$$
  \mathfrak{m}
  =
  \left\{
        \hat{X} :=   
        \left(
              \begin{array}{cccc}
                O_{n} & X^{*} \\
                X     & O_{m}   \\
              \end{array}
        \right)
        \: : \:
        X \in M_{m \times n}(\bbc)  
  \right\}.
$$

Since the right multiplication $R_h,$ $h \in U(n) \times U(m),$ is an isometry with respect the left invariant metric given by the equaion (\ref{metric}),
there are two kinds of principal bundles 
$$U(m) \rightarrow U(n,m) \stackrel{\hat{\pi}} \lra U(n,m)/U(m) $$
and
$$U(n)\times U(m) \rightarrow U(n,m) \stackrel{\tilde{\pi}} \lra D_{n,m} $$
such that both $\hat{\pi}$ and $\tilde{\pi}$ are Riemannian submersions.
Note that each action of $g \in U(n,m)$ on $U(n,m)/U(m)$ and on $D_{n,m},$
denoted by  
$$\widehat{L}_g: U(n,m)/U(m) \lra U(n,m)/U(m),$$
and 
$$\mathbb{L}_g: D_{n,m} \lra D_{n,m},$$
respectively,
is an isometry from 
$$
  \widehat{L}_g \circ \hat{\pi} = \hat{\pi} \circ L_g
  \quad \text{and} \quad
  \mathbb{L}_g \circ \tilde{\pi} = \tilde{\pi} \circ L_g.
$$

For each $h_1 \in U(n),$ consider the right action 
$$\widehat{R}_{h_1}: U(n,m)/U(m) \ra U(n,m)/U(m)$$ 
given by 
$$ \widehat{R}_{h_1} \big( g U(m) \big) = \big(gU(m)\big)\cdot h_1:= (gh) U(m),$$
where 
$
  h =
  \left(
        \begin{array}{cccc}
               h_1       & O_{n \times m} \\
          O_{m \times n} & I_m \\
        \end{array}
  \right).
$
This action is well-defined from
$$
  \left(
        \begin{array}{cccc}
               h_1       & O \\
          O & I_m \\
        \end{array}
  \right)
  \left(
        \begin{array}{cccc}
               I_n       & O \\
          O & h_2 \\
        \end{array}
  \right)
  =
  \left(
        \begin{array}{cccc}
               I_n       & O \\
          O & h_2\\
        \end{array}
  \right)
  \left(
        \begin{array}{cccc}
               h_1       & O \\
          O & I_m \\
        \end{array}
  \right).
$$
for any $h_2 \in U(m).$
By abusing of notations, write $R_h = R_{h_1}. $
Then the equation 
$\widehat{R}_{h_1} \circ \hat{\pi} = \hat{\pi} \circ R_{h_1}$ implies that $\widehat{R}_{h_1}\!\!: U(n,m)/U(m) \ra U(n,m)/U(m)$ is an isometry 
since $R_{h_1}\!\!: U(n,m) \ra U(n,m)$ is an isometry preserving each fiber of the bundle 
$\hat{\pi}:U(n,m) \ra U(n,m)/U(m).$ More concretely, for any horizontal vector $x$ with respect to the Riemannian submersion 
$\hat{\pi}:U(n,m) \ra U(n,m)/U(m),$ 
$R_{h_1 *}\, x$ is a horizontal vector  since $R_{h_1}$  is an isometry preserving the fibers of the bundle $\hat{\pi}:U(n,m) \ra U(n,m)/U(m),$ so
\begin{align*}
  |\widehat{R}_{h_1 *}\, \hat{\pi}_* x|
  &= | \hat{\pi}_* R_{h_1 *}\, x | \\
  &= | R_{h_1 *}\, x | \:\: 
  \\
  &= |x| \\
  &= |\hat{\pi}_* x|.
\end{align*}

\bigskip

Consider another bundle
$$U(n) \ra U(n,m)/U(m) \stackrel{\pi} \lra D_{n,m}$$
and a metric structure on $D_{n,m},$
by regarding  
$D_{n,m}$ as the space of orbits of $U(n,m)/U(m)$ obtained from the action of a subgroup 
$\{\widehat{R}_{h_1} | h_1 \in U(n)\}$ of the isometry group of $U(n,m)/U(m),$
such that its projection $\pi$ is a Riemannian submersion.
In fact, for each $g \in U(n,m),$
$$
  g \big(U(n) \times U(m)\big)
  \:=\: \bigcup\{\widehat{R}_{h_1}\big(g U(m)\big) \;\big|\; h_1 \in U(n)\},
$$
which enables us to identify $D_{n,m}$ with the space of orbits through
$$\pi(gU(m)) = g \big(U(n) \times U(m)\big),$$
and then we get $\pi \circ \hat{\pi} = \tilde{\pi}.$
\bigskip

For each $g \in U(n,m)$ and for each $h_1\! \in U(n),$  it is obvious that both $\widehat{L}_g$ and $\widehat{R}_{h_1}$ preserve the fibers of the bundle $\pi: U(n,m)/U(m) \ra D_{n,m}.$
In fact, $\pi \circ \widehat{R}_{h_1}  =\pi$ by definition of $\pi$ and 
$\pi \circ \widehat{L}_g = \mathbb{L}_g \circ \pi$ from
$$
  (\pi \circ \widehat{L}_g) \circ \hat{\pi}
  = (\pi \circ \hat{\pi}) \circ L_g
  = \tilde{\pi} \circ L_g
  = \mathbb{L}_g \circ \tilde{\pi}
  = (\mathbb{L}_g \circ \pi) \circ \hat{\pi}.
$$

\bigskip

Note that we have given two metric structures on $D_{n,m}.$ 
In other words, we think of two Riemannian manifolds:
the first one is
the Riemannian manifold 
$(D_{n,m}, \langle \cdot, \cdot \rangle_1)$
with $\tilde{\pi}: U(n,m) \ra (D_{n,m}, \langle \cdot, \cdot \rangle_1)$ a Riemannian submersion
and
the other one is the Riemannian manifold 
$(D_{n,m}, \langle \cdot, \cdot \rangle_2)$
with $\pi: U(n,m)/U(m) \ra (D_{n,m}, \langle \cdot, \cdot \rangle_2)$ a Riemannian submersion.
But, 
$$
  \tilde{\pi}=\pi \circ \hat{\pi} 
  \qquad \text{and} \qquad 
  \hat{\pi} \circ L_{g^{-1}}= \widehat{L}_{g^{-1}}\circ\hat{\pi}, 
  \:\: \forall g \in U(n,m)
$$ 
say that
these two metric structures are same, that is,
$$|\tilde{\pi}_* x|_1 = |\tilde{\pi}_* x|_2$$
for any horizontal vector $x$ with respect to the Riemannian submersion $\tilde{\pi}:U(n,m) \ra (D_{n,m}, \langle \cdot, \cdot \rangle_1).$ 
To show it, assume 
$x \in T_gU(n,m), \,g \in U(n,m).$ The identification of the tangent space of $U(n,m)$ at the Identity and $\mathfrak{u}(n,m)$ gives
$L_{g^{-1} *} \,x \perp \big(\mathfrak{u}(n)\!+\!\mathfrak{u}(m)\big)$
and so $L_{g^{-1} *} \,x \perp \mathfrak{u}(m),$
which means that $L_{g^{-1} *} \,x$ is horizontal with respect to 
$\hat{\pi}: U(n,m) \ra U(n,m)/U(m).$
Then, since $\widehat{L}_{g^{-1}}:U(n,m)/U(m) \ra U(n,m)/U(m)$ is an isometry,
$\hat{\pi} \circ L_{g^{-1}}= \widehat{L}_{g^{-1}}\circ\hat{\pi}$
says that
$x$ is also horizontal with respect to $\hat{\pi}: U(n,m)\ra U(n,m)/U(m),$
in other words, $|x| =|\hat{\pi}_* x|.$
More precisely,
$$|x| = |L_{g^{-1} *} \,x| = |\hat{\pi}_* L_{g^{-1} *} \,x| = |\widehat{L}_{g^{-1} *} \hat{\pi}_* x| = |\hat{\pi}_* x|.$$
Furthermore,
$L_{g^{-1} *} \,x \perp \big(\mathfrak{u}(n)\!+\!\mathfrak{u}(m)\big)$
also says that
$L_{g^{-1} *} \,x \perp \mathfrak{u}(n)$
and
$L_{g^{-1} *} \,x \perp \mathfrak{u}(m)$
at the same time,
which means
$\hat{\pi}_* L_{g^{-1} *} \,x=  L_{g^{-1} *} \,x +\mathfrak{u}(m)$ 
is perpendicular to 
$
  \{V+\mathfrak{u}(m)\;|\;V \in \mathfrak{u}(n)\}
$ 
at the origin of
$U(n,m)/U(m)$ from $\mathfrak{u}(n) \perp \mathfrak{u}(m)$
and so horizontal with respect to 
$\pi:U(n,m)/U(m) \ra(D_{n,m}, \langle \cdot, \cdot \rangle_2)$
because 
$\{V+\mathfrak{u}(m)\;|\;V \in \mathfrak{u}(n)\}$ is the 
kernel of $\pi_*$ of the bundle 
$\pi:U(n,m)/U(m) \ra(D_{n,m}, \langle \cdot, \cdot \rangle_2)$
at the origin of $U(n,m)/U(m)$ from $[\mathfrak{u}(n),\mathfrak{u}(m)]=0.$
Then, 
since 
$\widehat{L}_{g}: U(n,m)/U(m) \ra U(n,m)/U(m)$ is an isometry preserving the fibers of the bundle $\pi: U(n,m)/U(m) \ra D_{n,m},$
$\widehat{L}_{g \,*} \hat{\pi}_* L_{g^{-1} *} \,x$
is also horizontal with repect to 
$\pi:U(n,m)/U(m) \ra(D_{n,m}, \langle \cdot, \cdot \rangle_2).$
And from
$$
\widehat{L}_{g \,*} \hat{\pi}_* L_{g^{-1} *} \,x 
= \hat{\pi}_* L_{g \,*}  L_{g^{-1} *} \,x
= \hat{\pi}_* \,x,
$$
we get that
$\hat{\pi}_* \,x$ is horizontal with repect to 
$\pi:U(n,m)/U(m) \ra(D_{n,m}, \langle \cdot, \cdot \rangle_2)$
and that
$$|\hat{\pi}_* x|= |\pi_* \hat{\pi}_* x|_2,$$
Therefore, $\tilde{\pi}=\pi \circ \hat{\pi}$ gives
$$
|\tilde{\pi}_* x|_1 = |x| =|\hat{\pi}_* x|
= |\pi_* \hat{\pi}_* x|_2 = |\tilde{\pi}_* x|_2.
$$
Thus, we will not distinguish one metric on $D_{n,m}$ from the other one.

\begin{lemma} \label{lemma}
Given a nontrivial $X \in M_{m \times n}(\bbc),$ 
the Lie subalgebra of $\mathfrak{u}(n,m),$ generated by
$$
  \widehat{X} =
  \left(
        \begin{array}{cccc}
          O_n & X^* \\
           X  & O_m \\
        \end{array}
  \right),
  \quad  
  \widehat{iX} =
  \left(
        \begin{array}{cccc}
          O_n & -iX^* \\
          iX  & O_m \\
        \end{array}
  \right),
$$
is
$$
  \emph{Span}_{\bbr} \{
                         \widetilde{V}_k, \widetilde{X}_k, \widetilde{iX}_k 
                         \: \big| \:  k=1,2, \cdots 
                     \},
$$
where
$$
   \widetilde{V}_k =  \left(
                            \begin{array}{cccc}
                                i(X^*X)^k & O  \\
                                   O      & -i (XX^*)^k  \\
                            \end{array}
                      \right),
$$
$$
   \widetilde{X}_k =  \left(
                            \begin{array}{cccc}
                                   O_n     &  (X^*X)^{k-1}X^*\\
                               X(X^*X)^{k-1}   &    O_m  \\
                            \end{array}
                      \right),
$$
and
$$
   \widetilde{iX}_k =  \left(
                             \begin{array}{cccc}
                                    O_n      &  -i(X^*X)^{k-1}X^*\\
                                iX(X^*X)^{k-1}   &    O_m  \\
                             \end{array}
                       \right).
$$
Furthermore, for $q= \emph{rk}X,$
\begin{align} \label{basis}
  \emph{Span}_{\bbr} \{ \widetilde{V}_k \: \big| \:  k=1, \cdots  \} 
  =\emph{Span}_{\bbr} \{ \widetilde{V}_k \: \big| \:  k=1, \cdots, q  \} , 
\end{align}
is, at most, a $q$-dimensional subalgebra of 
$\mathfrak{u}(n) + \mathfrak{u}(m).$ 
\end{lemma}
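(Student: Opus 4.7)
\medskip
\noindent\textbf{Proof proposal.} The plan has three stages: (i) establish two commutation identities that let powers of $X^*X$ and $XX^*$ pass between the diagonal blocks; (ii) run an induction on $k$ to show that the listed matrices $\widetilde{V}_k, \widetilde{X}_k, \widetilde{iX}_k$ are exactly what the brackets generate and that the span is closed under them; (iii) bound the dimension of the $\widetilde{V}_k$-part using the singular value decomposition of $X$.

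First I would verify, by iteration from the obvious $(XX^*)X = X(X^*X)$, the two identities $(XX^*)^k X = X(X^*X)^k$ and $(X^*X)^k X^* = X^*(XX^*)^k$. These are the mechanism that keeps the two diagonal blocks of $\widetilde{V}_k$ synchronized after every bracket. With them in hand, the base case $[\widehat{X},\widehat{iX}] = 2\widetilde{V}_1$ is a direct block multiplication, and the inductive step reduces to three further block computations
\[
  [\widetilde{V}_k,\widehat{X}] = -2\widetilde{iX}_{k+1},
  \quad
  [\widetilde{V}_k,\widehat{iX}] = 2\widetilde{X}_{k+1},
  \quad
  [\widehat{X},\widetilde{iX}_{k+1}] = 2\widetilde{V}_{k+1},
\]
each of which propagates the index up by one, so that every $\widetilde{V}_k, \widetilde{X}_k, \widetilde{iX}_k$ lies in the generated Lie algebra. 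For the reverse inclusion I would then check closure: the diagonal blocks of each $\widetilde{V}_j$ are polynomials in $X^*X$ and $XX^*$, so all $\widetilde{V}_j$ mutually commute, and the remaining cross-brackets $[\widetilde{V}_j,\widetilde{X}_k]$, $[\widetilde{V}_j,\widetilde{iX}_k]$, $[\widetilde{X}_j,\widetilde{X}_k]$, $[\widetilde{X}_j,\widetilde{iX}_k]$, $[\widetilde{iX}_j,\widetilde{iX}_k]$ all collapse, via the same two identities, to integer multiples of $\widetilde{V}_{j+k}, \widetilde{X}_{j+k}, \widetilde{iX}_{j+k}$. I would present this as a pattern rather than a full case list.

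For the dimension statement on $\mathrm{Span}_{\bbr}\{\widetilde{V}_k\}_{k \ge 1}$, I would invoke the singular value decomposition $X = B\Sigma A^*$ with $A \in U(n)$ and $B \in U(m)$, which simultaneously diagonalizes $X^*X = A\Sigma^*\Sigma A^*$ and $XX^* = B\Sigma\Sigma^* B^*$ with common nonzero spectrum $\sigma_1^2, \dots, \sigma_q^2$. Grouping by the $p \le q$ distinct positive eigenvalues and using their orthogonal spectral projections, $(X^*X)^k$ and $(XX^*)^k$ become, for each $k \ge 1$, linear combinations of those projections with coefficients $\sigma_{j_l}^{2k}$. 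A Vandermonde argument on the coefficient matrix $(\sigma_{j_l}^{2k})$ then identifies $\mathrm{Span}_{\bbr}\{\widetilde{V}_k\}_{k \ge 1}$ as a real subspace of dimension exactly $p \le q$, already exhausted at $k = 1, \dots, q$; its abelian nature inside $\mathfrak{u}(n)+\mathfrak{u}(m)$ is automatic from the block-diagonal shape and the commutativity noted above. The delicate point, in my view, is not the bound itself but keeping the $X^*X$ and $XX^*$ blocks in lockstep throughout the induction, which the simultaneous SVD handles in a single stroke.
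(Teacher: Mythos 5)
Your proposal is correct and follows essentially the same route as the paper: the first assertion is verified by the same direct block-bracket computations (which the paper leaves as ``direct calculations''), and the dimension bound on $\mathrm{Span}_{\bbr}\{\widetilde{V}_k\}$ comes from the singular value decomposition $X = B\Sigma A^*$ exactly as in the paper's argument. Your Vandermonde refinement pinning the dimension at exactly $p$ (the number of distinct positive singular values) is slightly sharper than the paper's ``at most $q$'' and matches what the paper later exploits in Remark \ref{remark_soln}; just note that in your closure check the resulting indices are $j+k-1$ rather than $j+k$ for some of the cross-brackets (e.g.\ $[\widetilde{X}_j,\widetilde{iX}_k]=2\widetilde{V}_{j+k-1}$), which does not affect the conclusion.
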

\begin{proof}
The first assertion can be given by direct calculations. \\
For the second one, consider a bundle
$$U(n) \times U(m) \ra U(n,m) \stackrel{\tilde{\pi}} \lra D_{n,m}.$$ 

For $X \in M_{m \times n}(\bbc),$ the following Lemma \ref{sing} make us consider three matrices $A \in U(n), B \in U(m)$ and 
$\Sigma \in M_{m \times n}(\bbc)$ such that $X=B \Sigma A^*.$ Then
$$ X^*X = A\Sigma^*\Sigma A^*, \quad XX^* = B\Sigma\Sigma^* B^*$$
and
$$
  \widetilde {V}_k
  =     
        \left(
              \begin{array}{cccc}
                A & O \\
                O & B \\
              \end{array}
        \right) 
        \left(
              \begin{array}{cccc}
                i\, (\Sigma^*\Sigma)^k  &     O \\
                          O             & -i\, (\Sigma\Sigma^*)^k \\
              \end{array}
        \right) 
        \left(
              \begin{array}{cccc}
                A^* & O \\
                O & B^* \\
              \end{array}
        \right),       
$$
where
$$
  \Sigma^*\Sigma 
  = \text{diag}[
                \sigma_1^2, \cdots, \sigma_q^2,
                \underbrace{0, \: \cdots, 0}_{(n-q) \text{ times}}
               ]
  \,\in \,M_{n \times n}
$$
and
$$
  \Sigma\Sigma^* 
  = \text{diag}[
                \sigma_1^2, \cdots, \sigma_q^2,
                \underbrace{0, \: \cdots, 0}_{(m-q) \text{ times}}
               ]
  \,\in \,M_{m \times m}
$$
for $\sigma_1 \ge \cdots \ge \sigma_q>0,$ the positive square roots of the decreasingly ordered nonzero eigenvalues of $WW^*,$ which are the same as the decreasingly ordered nonzero eigenvalues of $W^*W.$
Thus, the equation (\ref{basis}) is trivially obtained and so it is obvious that its dimension is less that or equal to $q.$
And it is a Lie algebra from $[\widetilde{V}_k, \widetilde{V}_j]=0$ 
for $k,j =1,2, \cdots.$
\end{proof}

\medskip
The following Lemma on \emph{Singular value decomposition} plays an important role in this paper.
\begin{lemma} \cite{HJ} \label{sing}
Let $W \in M_{m \times n}(\bbc)$ be given, put $a=\emph{min}\{m,n\},$ and suppose that $\emph{rk}W=q.$ \\
\indent
\emph{(i)} 
There are unitary matrices $A \in U(n)$ and $B \in U(m)$ and a square diagonal matrix 
$$
   \Sigma_a =  \left(
                             \begin{array}{cccc}
                                 \sigma_1    &        &    0     \\
                                             & \ddots &          \\
                                     0       &        & \sigma_a  \\
                             \end{array}
               \right)
$$
such that 
$
 \sigma_1 \geq \sigma_2 \geq \cdots \geq \sigma_q > 0 
 = \sigma_{q+1} = \cdots =\sigma_a
$
and $W=B \Sigma A^*,$ in which
\begin{displaymath}
  \Sigma= \left\{
                 \begin{array}{ll}
               \Sigma_a   &\text{ in case of } n=m, \\
               (\Sigma_a \:\: O)^T \in M_{m\times n}  &\text{ in case of } n<m, \\
               (\Sigma_a \:\: O)   \in M_{m\times n}  &\text{ in case of } n>m.   
                 \end{array} 
          \right.
\end{displaymath}
\indent
\emph{(ii)}
The parameters $\sigma_1, \cdots, \sigma_q$ are the positive square roots of the decreasingly ordered nonzero eigenvalues of $WW^*,$ which are the same as the decreasingly ordered nonzero eiganvalues of $W^*W.$
\end{lemma}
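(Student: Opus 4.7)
The plan is to reduce the statement to the spectral theorem applied to the Hermitian positive semi-definite matrix $W^*W \in M_{n\times n}(\bbc)$. First I would observe that $W^*W$ is Hermitian with real nonnegative eigenvalues, so the spectral theorem produces $A \in U(n)$ such that $A^*(W^*W)A$ is diagonal. Ordering the eigenvalues decreasingly as $\sigma_1^2 \geq \cdots \geq \sigma_n^2 \geq 0$ and noting that $\mathrm{rk}(W^*W) = \mathrm{rk}\,W = q$, exactly $q$ of them are positive. This already handles part (ii), since the standard argument that $v \mapsto Wv$ intertwines nonzero eigenspaces of $W^*W$ and $WW^*$ gives a bijection between their positive spectra with multiplicities.

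Next I would construct the left unitary $B \in U(m)$. Letting $a_j$ denote the columns of $A$, for $j \le q$ I set $b_j = \sigma_j^{-1} W a_j \in \bbc^m$. Using $a_j^*(W^*W)a_k = \sigma_j^2 \delta_{jk}$ for $j,k \le q$, a one-line calculation gives $b_j^* b_k = \delta_{jk}$, so $\{b_1, \ldots, b_q\}$ is orthonormal in $\bbc^m$. Extending this set to an orthonormal basis $\{b_1, \ldots, b_m\}$ of $\bbc^m$ via Gram--Schmidt and taking $B$ to be the matrix with these columns produces the desired unitary.

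To verify $W = B \Sigma A^*$, I would equivalently check $B^* W A = \Sigma$ column by column. For $k \le q$, $Wa_k = \sigma_k b_k$ by construction, so the $k$-th column of $B^*WA$ is $\sigma_k e_k$. For $k > q$, $\|Wa_k\|^2 = a_k^*(W^*W)a_k = \sigma_k^2 = 0$ forces $Wa_k = 0$, producing the remaining zero columns. The three shape cases ($n=m$, $n<m$, $n>m$) then just record how the $q \times q$ nontrivial block of $\Sigma$ is padded with zeros so as to fit into $M_{m \times n}(\bbc)$ as prescribed in the statement.

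The main obstacle is purely bookkeeping: matching $\Sigma$ to the three rectangular cases and confirming that the nontrivial block sits in the upper-left corner after the decreasing ordering, and writing the extension of $\{b_1, \ldots, b_q\}$ to a unitary $B$ cleanly when $m > q$. The spectral step is standard and the orthonormality check for the $b_j$ is essentially immediate, so no genuinely new ideas are required beyond careful index management.
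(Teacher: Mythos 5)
Your proof is correct. The paper does not prove this lemma at all --- it is quoted verbatim from the reference [HJ] (Horn and Johnson) --- and your argument is precisely the standard derivation of the singular value decomposition from the spectral theorem for $W^*W$ (orthonormality of the $b_j=\sigma_j^{-1}Wa_j$, extension to a unitary $B$, and the column-by-column verification of $B^*WA=\Sigma$ are all sound), so nothing further is needed.
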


\medskip
  Recall the following proposition, which gives a sufficient condition to determine whether a given complex surface $S$ in $D_{n,m}$ related to $(X,g)$ is totally geodesic or not.
\begin{proposition} \cite{KN} \label{affine}
 Let $(G,H,\sigma)$ be a symmetric space and
 $\mathfrak{g} = \mathfrak{h} + \mathfrak{m}$
 the canonical decomposition. Then there is a natural one-to-one correspondence between the set of linear subspaces $\mathfrak{m}'$ of $\mathfrak{m}$ such that
$[[\mathfrak{m}', \mathfrak{m}'], \mathfrak{m}'] \subset \mathfrak{m}'$
and the set of complete totally geodesic submanifolds $M'$ through the origin $0$ of the affine symmetric space $M=G/H,$ the correspondence being given by 
$\mathfrak{m}' = T_0 (M').$
\end{proposition}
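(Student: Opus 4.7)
The plan is to establish the stated bijection between Lie triple systems $\frakm' \subseteq \frakm$ and complete totally geodesic submanifolds $M'$ of $M = G/H$ through the origin $0$ by constructing maps in each direction and verifying that they invert one another.

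For the forward direction, suppose $\frakm' \subseteq \frakm$ satisfies $[[\frakm', \frakm'], \frakm'] \subseteq \frakm'$. I would set $\frakh' := [\frakm', \frakm']$ (which lies in $\frakh$ by the symmetric decomposition $[\frakm,\frakm]\subseteq\frakh$) and $\frakg' := \frakh' + \frakm'$, then verify that $\frakg'$ is a Lie subalgebra of $\frakg$ by checking the three bracket inclusions $[\frakm', \frakm'] \subseteq \frakh'$ (by definition), $[\frakh', \frakm'] \subseteq \frakm'$ (the Lie triple condition), and $[\frakh', \frakh'] \subseteq \frakh'$ (from the Jacobi identity combined with the previous two). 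Let $G'$ be the connected Lie subgroup of $G$ with Lie algebra $\frakg'$ and set $M' := G' \cdot 0 \subseteq G/H$. Since in a symmetric space the geodesic through $0$ with initial velocity $X \in \frakm$ is $t \mapsto \exp(tX) \cdot 0$, taking $X \in \frakm'$ shows that every such geodesic lies in $M'$; by $G'$-homogeneity this suffices to conclude that $M'$ is totally geodesic and complete, with $T_0 M' = \frakm'$.

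For the reverse direction, assume $M'$ is a complete totally geodesic submanifold through $0$ and put $\frakm' := T_0 M' \subseteq \frakm$. I would invoke the identity $R_0(X,Y)Z = -[[X,Y], Z]$ for the curvature of $G/H$ at the origin (with $X, Y, Z \in \frakm$), together with the Gauss equation, which for a totally geodesic submanifold reduces to $R_0(X, Y) Z \in T_0 M'$ whenever $X, Y, Z \in \frakm'$. Combining these two facts gives $[[\frakm', \frakm'], \frakm'] \subseteq \frakm'$, the Lie triple condition. Bijectivity is then automatic: a complete totally geodesic submanifold through $0$ is determined by the geodesics through $0$ it contains, which in turn are parametrized by their initial velocities in $T_0 M'$, so $M' \mapsto T_0 M' \mapsto M'$ is the identity; the composition in the other order is the identity by the construction of $M'$ in the forward step.

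The main obstacle I expect is the closure of $\frakg' = \frakh' + \frakm'$ under the bracket, in particular the step $[[\frakm', \frakm'], [\frakm', \frakm']] \subseteq [\frakm', \frakm']$, which requires a careful application of the Jacobi identity together with the symmetric-space relations $[\frakh, \frakh] \subseteq \frakh$ and $[\frakm, \frakm] \subseteq \frakh$. A secondary subtlety is the completeness and embeddedness of the orbit $M' = G' \cdot 0$: the cleanest route is to realize $M'$ as $G'/H'$ with $H' = G' \cap H$ having Lie algebra $\frakh'$, and to verify that the induced metric from $G/H$ agrees with the natural homogeneous metric on $G'/H'$, so that completeness of $M'$ follows from that of $G'/H'$ as a Riemannian homogeneous space.
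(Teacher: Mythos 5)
The paper gives no proof of this proposition---it is quoted verbatim from Kobayashi--Nomizu \cite{KN} (Vol.~II, Ch.~XI, Theorem~4.3)---and your outline reproduces exactly the standard argument from that reference: $\mathfrak{g}'=[\mathfrak{m}',\mathfrak{m}']+\mathfrak{m}'$ is a subalgebra by the Jacobi identity, its orbit through the origin is the complete totally geodesic submanifold with tangent space $\mathfrak{m}'$, and conversely curvature-invariance of a totally geodesic submanifold combined with $R_0(X,Y)Z=-[[X,Y],Z]$ yields the Lie triple condition. Your plan is correct; the only cosmetic point is that the statement is for \emph{affine} symmetric spaces, so in the converse direction one should argue that the induced and ambient connections (hence curvatures) agree along $M'$ rather than appeal to the Riemannian Gauss equation.
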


\bigskip

\section{
          Holonomy displacement in the bundle
          $U(n) \ra U(n,1)/U(n) \ra U(n,1)/(U(n) \times U(1))$
        } 
        \label{special2}

\bigskip
                
Even though the result in this section can be obtained in view of Corollary \ref{corollary}, we deal with this section in the way  which will be used in Section \ref{sec}.

\bigskip

Given $X \in M_{1 \times n}(\bbc) \cong \bbc^n,$ consider 
$$
  \text{Span}_{\bbr}\{\widehat{X}, \widehat{iX}\} =\mathfrak{m}' 
  \subset \mathfrak{m} \subset \mathfrak{u}(n,m).
$$
Then, for $\lambda = \sqrt{XX^*} =|\widehat{X}|$
and for   $W = \tfrac{1}{\lambda} X,$
\begin{align*}
  &\widehat{W} 
  = \tfrac{1}{\lambda} \widehat{X}
  =
  \left(
        \begin{array}{cccc}
          O_n & W^* \\
          W & 0 \\
        \end{array}
  \right), 
  \\
  &\widehat{iW} 
  = \tfrac{1}{\lambda} \widehat{iX}
  =
  \left(
        \begin{array}{cccc}
           O_n  & -iW^*\\
          iW  & 0 \\
        \end{array}
  \right),
  \\
  &\widehat{V} \!:=\!
  \left(
        \begin{array}{cccc}
           iW^*W & O\\
           O & -i \\
        \end{array}
  \right)\!,
\end{align*}
which generate a 3-dimensional Lie algebra $\mathfrak{\widehat{g}}$
with $\widehat{G}$ its Lie group such that 
\begin{align} \label{totally_geo}
  [\widehat{W},\widehat{iW}] = 2 \widehat{V}, \quad
  [\widehat{V},\widehat{W}] = -2 \widehat{iW}, \quad
  [\widehat{V},\widehat{iW}] = 2 \widehat{W}.
\end{align}
Since $WW^* = 1,$
Lemma \ref{sing} says that,
for 
$\Sigma= \left( 1 \: 0 \cdots 0\right) \in M_{1 \times n}(\bbc),$ 
there are $A \in U(n)$ and $\mu \in \bbr$ 
such that
$$
  W 
  = B \Sigma A^* 
   \in M_{1 \times n}(\bbc),
$$
where  $B = (e^{i \mu}) \in U(1),$ 
and then 
for
$$
\Omega  = \left(
                 \begin{tiny} 
                  \begin{array}{ccc|c}
                     & && 0 \\
                     &A && \vdots \\
                     & && 0 \\
                     \hline
                     0 & \cdots &0& e^{i\mu} 
                  \end{array}
                 \end{tiny} 
          \right),
$$
we get another expressios for $\widehat{W},\widehat{iW},\widehat{V}$
through $\text{Ad}_{\Omega}:\mathfrak{u}(n,m) \ra \mathfrak{u}(n,m),$
\begin{align} \label{iso_1_n}
  \widehat{W}&= \Omega  
                \left(
                       \begin{tiny} 
                       \begin{array}{cccc|c}
                          && &  & 1\\
                          && &  & 0\\
                          & &O_n & &\vdots\\
                          &&&& 0\\
                          \hline
                          1 &0 & \cdots &0& 0 \\
                       \end{array}
                       \end{tiny}
                \right)
                \Omega^{-1}
\end{align}
\begin{align} \label{iso_2_n}
  \widehat{iW}&= \Omega  
                \left(
                       \begin{tiny} 
                       \begin{array}{cccc|c}
                          && &  & -i\\
                          && &  & 0\\
                          & &O_n & &\vdots\\
                          &&&& 0\\
                          \hline
                          i &0 & \cdots &0& 0 \\
                       \end{array}
                       \end{tiny}
                \right)
                \Omega^{-1}
\end{align}
and
\begin{align} \label{iso_3_n}
  \widehat{V} &= \Omega
                \left(
                       \begin{tiny} 
                       \begin{array}{cccc|c}
                         i & 0      &\cdots      & 0 & 0\\
                         0& 0      &\cdots      & 0 & 0\\
                         \vdots &\vdots       &\ddots      & \vdots &0\\
                         0 &0 &\cdots & 0&0\\
                         \hline
                         0 & 0      & 0      &\cdots&-i \\
                       \end{array}
                       \end{tiny}
                \right)
                \Omega^{-1}.
\end{align}
In fact, $WW^*=1$ implies that the nonzero eigenvalue of $W^*W$ 
consists of a simple 1 from Lemma \ref{sing}. More concretely, note that 
$$
  \widehat{V} 
  =
    \left(
          \begin{array}{cccc}
             i W^*W  &  O\\
              O &  -iWW^* \\
          \end{array}
    \right)
  =
    \left(
          \begin{array}{cccc}
             i W^*W  &  O\\
              O &  -i \\
          \end{array}
    \right)
$$
and
$$
  W^*W = A \Sigma^* \Sigma A^* =
    A 
    \left(
          \begin{array}{cccc}
              1 &  O\\
              O &  O_{n-1} \\
          \end{array}
    \right)
    A^*.
$$

Consider a complex surface $S_0$ related to $(X,e)$ in $D_{n,1},$ which is totally geodesic from Equation (\ref{totally_geo}) and from Proposition \ref{affine}, where $e$ is the identity of $U(n,m).$
For a smooth, simple, closed, orientaion-preserving curve $c:[0,1] \ra S_0,$    assume that 
$\hat{c}:[0,1] \ra U(n,1)/U(1),$ one of its horizontal lifts,  is given.
Let $\text{Area}(c)$ denote  the area of the region on the surface $S_0$ surrounded by $c$ with respect to the area form $\omega_0=\omega_{(X,e)}$ 
related to $(X,e)$ on $S_0.$
Put $A(c)$ denote the  area with respect to the induced metric on $S_0$ from the metric   on $D_{n,m},$ \emph{i.e.,} the area with respect to the induced area form $\omega_1$ on $S_0$ from the metric   on $D_{n,m}.$ 
Note that $\omega_0 = \omega_1$ from Proposition \ref{prop}.
$$
  \text{
       \emph{Claim}) 
       There exists an element $\Psi \in \mathfrak{u}(n)$ such that 
       }
$$
$$  
  \hat{c}(1) = \hat{c}(0) \cdot e^{\Psi} \quad \text{and} \quad 
  \text{Tr}(\Psi) = 2i \text{Area}(c) = 2i A(c).
$$

\medskip
Consider a curve $z(t)=r(t)e^{i \theta(t)}$ in $\bbc$ and 
another one $\tilde{\gamma}(t)$ in 
$\tilde{S}=\{e^{\widehat{zW}}| z \in \bbc\} \subset U(n,1)$ such that
$\tilde{\gamma}(t)=e^{\widehat{z(t)W}}$ is a lifting of $c.$ 
Then for a horizontal lifting of $\hat{c},$ under the identification of $\mathfrak{u}(n)$ and $\mathfrak{u}(n) + \{0\},$
we can find a curve 
\begin{align*}
  \Psi(t) 
  &= i \phi (t) W^*W \\
  &=
    \left(
          \begin{array}{cccc}
              i\phi (t) W^*W  &  O\\
              O &  0 \\
          \end{array}
    \right)   \\
  &=  
     \Omega
     \left(
           \begin{tiny} 
           \begin{array}{cccc|c}
              i \phi(t) & 0      &\cdots      & 0 & 0\\
              0& 0      &\cdots      & 0 & 0\\
              \vdots &\vdots       &\ddots      & \vdots &0\\
              0 &0 &\cdots & 0&0\\
              \hline
              0 & 0      & 0      &\cdots& 0 \\
           \end{array}
           \end{tiny}
     \right)
     \Omega^{-1}
    \: \in \mathfrak{u}(n)
\end{align*} 
such that 
$$
  \hat{c}(t)= \big(\tilde{\gamma}(t) \,U(1) \big) \cdot e^{\Psi(t)} 
            = (\tilde{\gamma}(t) e^{\Psi(t)}) \;U(1). 
$$
Note that
from
$$
  \widehat{z(t)W} = \Omega  
                    \left(
                          \begin{tiny} 
                          \begin{array}{cccc|c}
                             && &  & r(t) e^{-i \theta(t)}\\
                             && &  & 0\\
                             & &O_n & &\vdots\\
                             &&&& 0\\
                             \hline
                             r(t) e^{i \theta(t)} &0 & \cdots &0& 0 \\
                          \end{array}
                          \end{tiny}
                   \right)
                   \Omega^{-1},
$$
we get
$$
 \tilde{\gamma}(t)= \Omega  
                    \left(
                          \begin{tiny} 
                          \begin{array}{cccc|c}
                          \cosh{(r(t))}&0&\cdots&0&e^{-i\theta(t)}\sinh{(r(t))}\\
                             0&& &  & 0\\
                             \vdots& &I_{n-1} & &\vdots\\
                             0&&&& 0\\
                             \hline
                           e^{i\theta(t) }\sinh{(r(t))}&0&\cdots&0&\cosh{(r(t))}                                      
                          \end{array}
                          \end{tiny}
                   \right)
                   \Omega^{-1}.
$$
Then for $\bar{c}(t)=\tilde{\gamma}(t) e^{\Psi(t)},$
$$
 \bar{c}(t)= \Omega  
             \left(
                  \begin{tiny} 
                  \begin{array}{cccc|c}
              e^{i\phi(t)}\cosh{(r(t))}&0&\cdots&0&e^{-i\theta(t)}\sinh{(r(t))}\\
                    0&& &  & 0\\
                    \vdots& &I_{n-1} & &\vdots\\
                    0&&&& 0\\
                    \hline
               e^{i (\theta(t)+\phi(t))}\sinh{(r(t))}&0 &\cdots&0&\cosh{(r(t))}
                  \end{array}
                  \end{tiny}
             \right)
             \Omega^{-1}
$$
and from
\begin{align*}
  &L_{{\bar{c}(t)^{-1}}_{*}} \dot{\bar{c}}(t) + {\mathfrak u}(1) \\
  &= e^{-\Psi (t)}\tilde{\gamma}(t)^{-1}
     \big(
          \tilde{\gamma}'(t) e^{\Psi(t)}
          + \tilde{\gamma}(t) e^{\Psi(t)}\Psi'(t)
     \big)
     + {\mathfrak u}(1)  \\
  &= \big(
          e^{-\Psi (t)}\tilde{\gamma}(t)^{-1} \tilde{\gamma}'(t)e^{\Psi (t)} 
          + \Psi'(t)
     \big)
     + {\mathfrak u}(1),
\end{align*}
$\hat{c}(t) = \bar{c}(t) U(1)$ is horizontal if and only if
the first $(n \times n)-$block of
$e^{-\Psi (t)}\tilde{\gamma}(t)^{-1} \tilde{\gamma}'(t)e^{\Psi (t)} + \Psi'(t)$
is a zero matrix, in other words, 
$$ 
  i \big(\phi'(t) - \theta'(t) \sinh^2(r(t))\big) = 0,
  \quad \emph{i.e.,} \quad
  \phi'(t) = \theta'(t) \sinh^2(r(t)).
$$
So, for  
$\Psi := \Psi(1)-\Psi(0) \in \text{Span}_{\bbr}\{i(X^*X)\} \in \mathfrak{u}(n)$
and for the region $D(\subset S)$ enclosed by the given orientation curve $c,$
Proposition \ref{prop} says that
\begin{align*}
  \text{Tr}\big(\Psi \big) 
  &= i (\phi(1) - \phi(0))
  \\
  &= i \: \int_{0}^{1} \theta'(t) \sinh^2(r(t)) dt \\
  &= 2 i \int_{[0,1]}\tfrac{1}{2}\sinh^2{(r(t))}\,\theta'(t)dt  \\
  &= 2 i \int_{c}\:\tfrac{1}{2}\sinh^2{r}\,d\theta  \\
  &= 2 i \int_{D}d\Big(\tfrac{1}{2}\sinh^2{r}\,d\theta\Big) \\
  &= 2 i \int_D \omega_0 \\
  &= 2 i \,\text{Area}(c) \\
  &= 2 i \, A(c).
\end{align*}
Since $\tilde{\gamma}(0)=\tilde{\gamma}(1)$
and $[\Psi(0), \Psi(1)]=O_n$ in $\mathfrak{u}(n),$ we get
$$
\bar{c}(0)^{-1}\bar{c}(1) 
= \big(\tilde{\gamma}(0) e^{\Psi(0)}\big)^{-1} 
  \big(\tilde{\gamma}(1) e^{\Psi(1)}\big)
= e^{-\Psi(0)}  e^{\Psi(1)}
= e^{\Psi}
$$
and so
$$
  \hat{c}(1) = \bar{c}(1)K =(\bar{c}(0) \, e^{\Psi})\,K 
  =\bar{c}(0)K\cdot e^{\Psi} = \hat{c}(0) \cdot e^{\Psi},
$$
\emph{i.e.}, the holonomy displacement is given by
the right action of $e^{\Psi} \in U(n).$

\bigskip

\section{Proof of Proposition \ref{prop}}  \label{sec_prop}

\medskip
\begin{proof}
For the part (i),
let 
$$
  W= \tfrac{1}{\big|\widehat{X}\big|} X,
  \quad 
  \widehat{W}= \widehat{\tfrac{1}{\big|\widehat{X}\big|} X} 
             = \tfrac{1}{\big|\widehat{X}\big|}\widehat{X},
  \quad \text{and} \quad
  \widehat{iW}= \widehat{\tfrac{1}{\big|\widehat{X}\big|} iX}
              = \tfrac{1}{\big|\widehat{X}\big|}\widehat{iX}
$$
and consider an ordered pair 
$\big( \widehat{W}, \widehat{iW}\big),$
which induces an oriented orthonormal basis of each tangent space of $S_0$ and an area form $\omega_0 = \omega_{(X,e)}$
related to $(X,e)$ on $S_0,$ where $e$ is the identity of $U(n,m).$

Assume $\text{rk}W = q, \; \text{min}\{n,m\}=a \text{ and } \text{Max}\{n,m\}=b.$ Then, Lemma \ref{sing} says that
there are nonnegative real numbers 
$$
  \sigma_1 \geq \cdots \geq \sigma_q > 0 = \sigma_{q+1} = \cdots = \sigma_a 
   = \cdots = \sigma_b = \cdots = \sigma_{n+m}
$$ 
such that
$\sigma_1, \cdots, \sigma_q$ are positive square roots of nonzero eigenvalues of $WW^*$ and $W^*W$ and that
$\Sigma_a = \text{diag}[\sigma_1, \cdots, \sigma_a]$ and
$$ W= B \Sigma A^*, \qquad A \in \text{U}(n), \; B \in \text{U}(m),   $$
where 
\begin{displaymath}
  \Sigma= \left\{
                 \begin{array}{ll}
               \Sigma_a   &\text{ in case of } n=m, \\
               (\Sigma_a \:\: O)^T \in M_{m\times n}  &\text{ in case of } n<m, \\
               (\Sigma_a \:\: O)   \in M_{m\times n}  &\text{ in case of } n>m.   
                 \end{array} 
          \right.
\end{displaymath}

For $r \geq 0$ and for $\theta \in \bbr,$ let $z = r e^{i \theta}$ and consider 
$\tilde{S}= \{e^{\widehat{z W}} \, | \, z \in \bbc  \}.$
Let
\begin{align*}
  \Omega
  =   
        \left(
              \begin{array}{cccc}
                A & O \\
                O & B \\
              \end{array}
        \right). 
\end{align*}
Then,
\begin{align*}
  e^{\widehat{zW}} 
  &=   
        \left(
              \begin{array}{cccc}
                \sum_{k=0}^{\infty}\frac{1}{(2k)!} r^{2k}(W^*W)^k 
                & e^{-i\theta}\sum_{k=0}^{\infty}\frac{1}{(2k+1)!} 
                    r^{2k+1}(W^*W)^kW^*\\
                e^{i\theta}\sum_{k=0}^{\infty}\frac{1}{(2k+1)!} 
                    r^{2k+1}W(W^*W)^k
                & \sum_{k=0}^{\infty}\frac{1}{(2k)!} r^{2k}(WW^*)^k \\
              \end{array}
        \right) 
  \\      
  &=   
        \left(
              \begin{array}{cccc}
                A & O \\
                O & B \\
              \end{array}
        \right) 
        \left(
              \begin{array}{cccc}
                \Gamma_n(z) & \Lambda(z)^* \\
                \Lambda(z)  & \Gamma_m(z) \\
              \end{array}
        \right) 
        \left(
              \begin{array}{cccc}
                A^* & O \\
                O & B^* \\
              \end{array}
        \right)       
  \\
  &=    \Omega
        \left(
              \begin{array}{cccc}
                \Gamma_n(z) & \Lambda(z)^* \\
                \Lambda(z)  & \Gamma_m(z) \\
              \end{array}
        \right) 
        \Omega^*
\end{align*}
from
$$
  W(W^*W)^k = (WW^*)^kW, \:
  W^*W=A\Sigma^*\Sigma A^* \: 
  \text{ and } 
  WW^*=B\Sigma\Sigma^*B^*,
$$
where
$$\Gamma_n(z) = \text{diag}[\cosh{(\sigma_1r)}, \cdots, \cosh{(\sigma_nr)}], $$
$$\Gamma_m(z) = \text{diag}[\cosh{(\sigma_1r)}, \cdots, \cosh{(\sigma_mr)}], $$
and for 
$
  \Lambda_a(z) = 
  \text{diag}[e^{i\theta}\sinh{(\sigma_1r)},\cdots,e^{i\theta}\sinh{(\sigma_ar)}],
$
\begin{displaymath}
  \Lambda(z)= 
     \left\{
            \begin{array}{ll}
              \Lambda_a(z)   &\text{ in case of } a=n=m, \\
              (\Lambda_a(z)\:\: O)^T \in M_{m\times n}&\text{ in case of }a=n<m,\\
              (\Lambda_a(z)\:\: O)   \in M_{m\times n}  
              &\text{ in case of }a=m<n.   
            \end{array} 
     \right.
\end{displaymath}
Note  $z \mapsto  e^{\widehat{zW}}: \, \bbc \ra \tilde{S}$ is a bijection.

By abusing notations, we use the same letter $(r,\theta)$ on $S_0$
for the induced one from the coordinate chart $(r,\theta)$ on 
$\tilde{S}.$
Then, the relation
\begin{align} \label{coord_vector}
  \tilde{\pi}_* \tfrac{\partial}{\partial r} 
      =  \tfrac{\partial}{\partial r} \circ \tilde{\pi}
  \quad \text{and} \quad
  \tilde{\pi}_*\tfrac{\partial}{\partial\theta} 
      = \tfrac{\partial}{\partial\theta}\circ\tilde{\pi}
\end{align}
holds. Note that,
under the identification of the Lie algebra ${\mathfrak u}(n,m)$ and 
$T_e G,$ where $G=U(n,m),$ the direct calculation shows that
\begin{align} \label{eqn_r} 
 {L_{e^{-\widehat{zW}}}}_{*} 
 \tfrac{\partial}{\partial r}\big|_{e^{\widehat{zW}}}  
 =
   \left( 
          \begin{array}{cccc}
                      O     & e^{-i\theta}W^* \\
              e^{i\theta}W  &      O  
          \end{array}
   \right),
\end{align}
and
\begin{align} \label{eqn_theta}
 &{L_{e^{-\widehat{zW}}}}_{*} 
 \tfrac{\partial}{\partial \theta}\big|_{e^{\widehat{zW}}} \\
 \nonumber
 &=
        \left(
              \begin{array}{cccc}
                \tfrac{-i}{2}\sum_{k=0}^{\infty}\frac{1}{(2k)!} (2r)^{2k}(W^*W)^k 
                & \tfrac{-i}{2} e^{-i\theta}\sum_{k=0}^{\infty}\frac{1}{(2k+1)!} 
                    (2r)^{2k+1}(W^*W)^kW^*\\
               \tfrac{i}{2} e^{i\theta}\sum_{k=0}^{\infty}\frac{1}{(2k+1)!} 
                    (2r)^{2k+1}W(W^*W)^k
                & \tfrac{i}{2}\sum_{k=0}^{\infty}\frac{1}{(2k)!}(2r)^{2k}(WW^*)^k\\
              \end{array}
        \right), 
\end{align}
so for $W^*W = A\Sigma^*\Sigma A^* = A\Sigma_n^*\Sigma_n A^*,$ 
where $\Sigma_n=\text{diag}[\sigma_1, \cdots, \sigma_n],$
we get
\begin{align*}
  &\omega_0
       \Big(
         \tfrac{\partial}{\partial r}|_{ \tilde{\pi}(e^{\widehat{zW}}) },\,
         \tfrac{\partial}{\partial\theta}|_{ \tilde{\pi}(e^{\widehat{zW}}) }
        \Big)  \\
  &= \text{det}
        \left(
              \begin{array}{cccc}
                \langle 
                   {L_{e^{-\widehat{zW}}}}_{*} 
                   \tfrac{\partial}{\partial r}\big|_{e^{\widehat{zW}}},\;  
                   \widehat{W}
                \rangle 
                & 
                \langle 
                   {L_{e^{-\widehat{zW}}}}_{*} 
                   \tfrac{\partial}{\partial \theta}
                   \big|_{e^{\widehat{zW}}},\;
                   \widehat{W}
                \rangle \\
                \langle 
                   {L_{e^{-\widehat{zW}}}}_{*} 
                   \tfrac{\partial}{\partial r}\big|_{e^{\widehat{zW}}},\;  
                   \widehat{iW}
                \rangle 
                & 
                \langle 
                   {L_{e^{-\widehat{zW}}}}_{*} 
                   \tfrac{\partial}{\partial \theta}
                   \big|_{e^{\widehat{zW}}},\;
                   \widehat{iW}
                \rangle \\
              \end{array}
        \right) \\
  &= \text{det}
        \left(
             \begin{array}{cccc}
               \cos{\theta} 
               & \frac{-1}{2} \sin{\theta} 
                 \sum_{k=0}^{\infty}\frac{1}{(2k+1)!} (2r)^{2k+1}
                 \text{Tr}(W^*W)^{k+1}\\
               \sin{\theta} 
               & \frac{1}{2} \cos{\theta} 
                 \sum_{k=0}^{\infty}\frac{1}{(2k+1)!} (2r)^{2k+1}
                 \text{Tr}(W^*W)^{k+1}
             \end{array}
        \right) \\
  &=  \frac{1}{2} 
      \sum_{k=0}^{\infty}\frac{1}{(2k+1)!} (2r)^{2k+1}
      (\sigma_1^{2(k+1)} + \cdots + \sigma_n^{2(k+1)})  \\
  &= \frac{1}{2} \sum_{j=1}^{n} \sigma_j \sinh{(2 \sigma_j r)}  \\
  &= \sum_{j=1}^{n} \sigma_j \sinh{(\sigma_j r)} \cosh{(\sigma_j r)}.
\end{align*}
Therefore,
$$
  \omega_0 \, = \, \sum_{j=1}^{n} \sigma_j  
                \cosh{(\sigma_j r)} \sinh{(\sigma_j r)} \,dr \wedge d\theta 
         \, = \, d\Big(
                       \sum_{j=1}^{n}  \tfrac{1}{2}\sinh^2{(\sigma_j r)} \,
                        d\theta
                  \Big).
$$
Furthermore, 
$$
   \widehat{W}^*\widehat{W}
   =
   \left( 
          \begin{array}{cccc}
             W^*W     & O \\
               O      & WW^*  
          \end{array}
   \right)
$$
shows that
$$
  1= |\widehat{W}|^2 = \text{Tr}(W^*W)= \text{Tr}(\Sigma_n^*\Sigma_n)
   = \sigma_1^2 + \cdots + \sigma_n^2. 
$$

\medskip
Before proving the part  (ii), note that the restriction of $\tilde{\pi}$ on $\tilde{S}$ is a bijection to $S_0$ from Remark \ref{bijection}, which induces that the restriction of $\mathbb{L}_{g}$ on $S_0$ is a bijection to $S$ from 
$\tilde{\pi} \circ L_g = \mathbb{L}_g \circ \tilde{\pi}.$
So, 
for $\mathbb{L}_{g^{-1}},$ its restriction
$\mathbb{L}_{g^{-1}}: S \ra S_0$ is also a bijection.
\\
\indent
To prove the part (ii), let $(r_1, \theta_1)$ be a coordinate chart on 
$S = \tilde{\pi} (g \tilde{S})$  with a complex structure induced from $\tilde{S},$ \emph{i.e.,} 
$$
r_1 \big( \tilde{\pi}(g e^{\widehat{zW}}) \big) 
= r\big(e^{\widehat{zW}}\big)
\quad \text{and} \quad
\theta_1 \big( \tilde{\pi}(g e^{\widehat{zW}}) \big) 
= \theta\big(e^{\widehat{zW}}\big),
$$
which says 
$$
 r_1 \circ \tilde{\pi} \circ L_g = r
 \quad \text{and} \quad
 \theta_1 \circ \tilde{\pi} \circ L_g = \theta.
$$
Then, from the equations (\ref{coord_vector}),
\begin{align*}
\tfrac{\partial}{\partial r_1}\big|_{\tilde{\pi}\circ L_g(e^{\widehat{zW}})}
&= \big(\tilde{\pi}\circ L_g\big)_*
   \tfrac{\partial}{\partial r}\big|_{e^{\widehat{zW}}}
\\
&= \big(\mathbb{L}_g \circ\tilde{\pi}\big)_*
   \tfrac{\partial}{\partial r}\big|_{e^{\widehat{zW}}}
\\
&= \mathbb{L}_{g *}
   \tfrac{\partial}{\partial r}\big|_{\tilde{\pi}(e^{\widehat{zW}})},
\end{align*}
so we get
$$
\tfrac{\partial}{\partial r}\big|_{\tilde{\pi}(e^{\widehat{zW}})}
=\mathbb{L}_{{g^{-1}} \,*}\,
  \tfrac{\partial}{\partial r_1}\big|_{\tilde{\pi}(g e^{\widehat{zW}})}.
$$
Similarly, we also get
$$
\tfrac{\partial}{\partial \theta}\big|_{\tilde{\pi}(e^{\widehat{zW}})}
=\mathbb{L}_{{g^{-1}} \,*}\,
  \tfrac{\partial}{\partial \theta_1}\big|_{\tilde{\pi}(g e^{\widehat{zW}})}.
$$
Then, 
\begin{align*}
  &\omega_{(X,g)}
       \Big(
         \tfrac{\partial}{\partial r_1}\big|_{ \tilde{\pi}(g e^{\widehat{zW}}) }
         ,\,
         \tfrac{\partial}{\partial\theta_1}\big|_{\tilde{\pi}(ge^{\widehat{zW}})}
        \Big) \\       
  &=\omega_{(X,g)}
       \Big(
           \tilde{\pi}_*  L_{g *}
            \tfrac{\partial}{\partial r}\big|_{e^{\widehat{zW}}}
           ,\,
           \tilde{\pi}_*  L_{g *}
            \tfrac{\partial}{\partial \theta}\big|_{e^{\widehat{zW}}}
        \Big) \\       
  &= \text{det}
        \left(
              \begin{array}{cccc}
                \langle 
                   {L_{(g e^{\widehat{zW}})^{-1}}}_{*} 
                    L_{g *}\tfrac{\partial}{\partial r}\big|_{e^{\widehat{zW}}}
                   ,\;  
                   \widehat{W}
                \rangle 
                & 
                \langle 
                   {L_{(g e^{\widehat{zW}})^{-1}}}_{*} 
                    L_{g *}\tfrac{\partial}{\partial \theta}
                    \big|_{e^{\widehat{zW}}}
                   ,\;
                   \widehat{W}
                \rangle \\
                \langle 
                   {L_{(g e^{\widehat{zW}})^{-1}}}_{*} 
                    L_{g *}\tfrac{\partial}{\partial r}\big|_{e^{\widehat{zW}}}
                    ,\;  
                   \widehat{iW}
                \rangle 
                & 
                \langle 
                   {L_{(g e^{\widehat{zW}})^{-1}}}_{*} 
                    L_{g *}\tfrac{\partial}{\partial \theta}
                   \big|_{e^{\widehat{zW}}}
                   ,\;
                   \widehat{iW}
                \rangle \\
              \end{array}
        \right) \\
  &= \text{det}
        \left(
              \begin{array}{cccc}
                \langle 
                   {L_{e^{-\widehat{zW}}}}_{*} 
                   \tfrac{\partial}{\partial r}\big|_{e^{\widehat{zW}}},\;  
                   \widehat{W}
                \rangle 
                & 
                \langle 
                   {L_{e^{-\widehat{zW}}}}_{*} 
                   \tfrac{\partial}{\partial \theta}
                   \big|_{e^{\widehat{zW}}},\;
                   \widehat{W}
                \rangle \\
                \langle 
                   {L_{e^{-\widehat{zW}}}}_{*} 
                   \tfrac{\partial}{\partial r}\big|_{e^{\widehat{zW}}},\;  
                   \widehat{iW}
                \rangle 
                & 
                \langle 
                   {L_{e^{-\widehat{zW}}}}_{*} 
                   \tfrac{\partial}{\partial \theta}
                   \big|_{e^{\widehat{zW}}},\;
                   \widehat{iW}
                \rangle \\
              \end{array}
        \right) \\
  &=\omega_0
       \Big(
         \tfrac{\partial}{\partial r}\big|_{ \tilde{\pi}(e^{\widehat{zW}}) },\,
         \tfrac{\partial}{\partial\theta}\big|_{ \tilde{\pi}(e^{\widehat{zW}}) }
        \Big)  \\
  &=\omega_0
       \Big(
           \mathbb{L}_{{g^{-1}} \,*}\,
           \tfrac{\partial}{\partial r_1}\big|_{\tilde{\pi}(g e^{\widehat{zW}})}       
            ,\,
           \mathbb{L}_{{g^{-1}} \,*}\,
           \tfrac{\partial}{\partial \theta_1}
           \big|_{\tilde{\pi}(g e^{\widehat{zW}})}       
        \Big)  \\
  &={\mathbb{L}_{g^{-1}}}^* \,\omega_0
       \Big(
           \tfrac{\partial}{\partial r_1}\big|_{\tilde{\pi}(g e^{\widehat{zW}})}       
            ,\,
           \tfrac{\partial}{\partial \theta_1}
           \big|_{\tilde{\pi}(g e^{\widehat{zW}})}       
        \Big).  \\
\end{align*}

\medskip

To prove part (iii), consider the induced area form $\omega$ on $S$  from the metric on $D_{n,m}.$ 
Since the restriction $\mathbb{L}_{g^{-1}}: S \ra S_0$ of 
$\mathbb{L}_{g^{-1}}: D_{n,m} \ra D_{n,m}$ is an isometry, we get
$$\omega =\mathbb{L}_{g^{-1}}^* \omega_1$$
for the induced area form $\omega_1$ on $S_0$  from the metric on $D_{n,m}.$ 
And, under the notation
$x^{\text{h}}$ of the horizontal part of a tangent vector 
$x \in T\big(U(n,m)\big)$ with respect to the Riemannian submersion $\tilde{\pi}:U(n,m) \ra D_{n,m},$ 
Equations (\ref{eqn_r}) and (\ref{eqn_theta})
show that 
$$
 {L_{e^{-\widehat{zW}}}}_{*} 
 \Big(
       \tfrac{\partial}{\partial r}\big|_{e^{\widehat{zW}}}  
 \Big)^{\text{h}}
 =
 \Big({L_{e^{-\widehat{zW}}}}_{*} 
       \tfrac{\partial}{\partial r}\big|_{e^{\widehat{zW}}}  
 \Big)^{\text{h}}
 \\
 =      {L_{e^{-\widehat{zW}}}}_{*} 
       \tfrac{\partial}{\partial r}\big|_{e^{\widehat{zW}}}  
$$
and that
\begin{align*}
 &{L_{e^{-\widehat{zW}}}}_{*} 
 \Big(
       \tfrac{\partial}{\partial \theta}\big|_{e^{\widehat{zW}}}  
 \Big)^{\text{h}}
 =
 \Big({L_{e^{-\widehat{zW}}}}_{*} 
       \tfrac{\partial}{\partial \theta}\big|_{e^{\widehat{zW}}}  
 \Big)^{\text{h}}
 \\
 &=
        \left(
              \begin{array}{cccc}
                O_n
                & \tfrac{-i}{2} e^{-i\theta}\sum_{k=0}^{\infty}
                  \frac{(2r)^{2k+1}}{(2k+1)!}  (W^*W)^kW^*\\
               \tfrac{i}{2} e^{i\theta}\sum_{k=0}^{\infty}
               \frac{(2r)^{2k+1}}{(2k+1)!}  W(W^*W)^k
                & O_m
              \end{array}
        \right)
 \\
 &=
        \left(
              \begin{array}{cccc}
                O_n
                & \tfrac{-i}{2} e^{-i\theta}\sum_{k=0}^{\infty}
                  \frac{(2r)^{2k+1}}{(2k+1)!}  A(\Sigma^*\Sigma)^k\Sigma^*B^*\\
               \tfrac{i}{2} e^{i\theta}\sum_{k=0}^{\infty}
               \frac{(2r)^{2k+1}}{(2k+1)!}  B\Sigma(\Sigma^*\Sigma)^k A^*
                & O_m
              \end{array}
        \right)
 \\
 &= 
   \Omega       
   \left( 
          \begin{array}{cccc}
                      O_n     & \tfrac{-i}{2} e^{-i\theta} \Upsilon(z)^* \\
              \tfrac{i}{2} e^{i\theta}\Upsilon(z)  &      O_m  
          \end{array}
   \right)
   \Omega^*,
\end{align*}
where
$$
  \Upsilon_a(z) = 
  \text{diag}  [\sinh{(2\sigma_1r)}\cdots,\sinh{(2\sigma_ar)}],
$$
and
\begin{displaymath}
  \Upsilon(z)= 
            \left\{
              \begin{array}{ll}
                \Upsilon_a   &\text{ in case of } n=m, \\
                (\Upsilon_a\:\: O)^T\in M_{m\times n}&\text{ in case of } n<m, \\
                (\Upsilon_a \:\: O)   \in M_{m\times n}  &\text{ in case of }n>m.   
              \end{array} 
            \right.
\end{displaymath}
Then,
\begin{align*}
  \Big|
       {L_{e^{-\widehat{zW}}}}_{*} 
       \tfrac{\partial}{\partial r}\big|_{e^{\widehat{zW}}}  
  \Big|
  &= \sqrt{
          \frac{1}{2}\text{Re}
          \Big(
             \text{Tr}
             \Big(
                 \big( 
                      {L_{e^{-\widehat{zW}}}}_{*} 
                      \tfrac{\partial}{\partial r}\big|_{e^{\widehat{zW}}} 
                 \big)^* 
                 {L_{e^{-\widehat{zW}}}}_{*} 
                 \tfrac{\partial}{\partial r}\big|_{e^{\widehat{zW}}}  
             \Big)
          \Big)
         }  
  \\
  &= \sqrt{\sigma_1^2 + \cdots + \sigma_n^2}
  \\
  &= 1,
\end{align*}
\begin{align*}
&2\,\langle
       {L_{e^{-\widehat{zW}}}}_{*} 
       \tfrac{\partial}{\partial r}\big|_{e^{\widehat{zW}}} 
       , 
       {L_{e^{-\widehat{zW}}}}_{*} 
       \tfrac{\partial}{\partial \theta}\big|_{e^{\widehat{zW}}}  
   \rangle
\\
&=\text{Re}
  \Big(
     \text{Tr}
     \Big(
          \big(
               {L_{e^{-\widehat{zW}}}}_{*} 
               \tfrac{\partial}{\partial r}\big|_{e^{\widehat{zW}}} 
          \big)^*    
          {L_{e^{-\widehat{zW}}}}_{*} 
          \tfrac{\partial}{\partial \theta}\big|_{e^{\widehat{zW}}}  
     \Big)
  \Big)
\\
&=\text{Re}
  \Big(
     \text{Tr}
     \Big(
         \Omega       
         \left( 
               \begin{array}{cccc}
                   O_n     &  e^{-i\theta}\Sigma^* \\
                   e^{i\theta} \Sigma  &      O_m  
               \end{array}
         \right)
         \left( 
               \begin{array}{cccc}
                         O_n     & \tfrac{-i}{2} e^{-i\theta} \Upsilon(z)^* \\
                 \tfrac{i}{2} e^{i\theta}\Upsilon(z)  &      O_m  
               \end{array}
         \right)
         \Omega^*
     \Big)
  \Big)
\\
&=\text{Re}
  \Big(
     \text{Tr}
     \Big(
         \left( 
               \begin{array}{cccc}
                      \tfrac{i}{2} \Sigma^* \Upsilon(z) & O  \\
                      O  &  \tfrac{-i}{2}\Sigma\Upsilon(z)^*  
               \end{array}
         \right)
     \Big)
  \Big)
\\
&=0
\end{align*}
and
\begin{align*}
 &\Big|
       \Big(
           {L_{e^{-\widehat{zW}}}}_{*} 
           \tfrac{\partial}{\partial \theta}\big|_{e^{\widehat{zW}}}  
       \Big)^{\text{h}}    
  \Big|
  \\
 &=\sqrt{
         \frac{1}{2}
         \text{Re}
         \Big( 
            \text{Tr}
            \Big(
                \Big(
                \big(
                    {L_{e^{-\widehat{zW}}}}_{*} 
                    \tfrac{\partial}{\partial \theta}\big|_{e^{\widehat{zW}}}  
                \big)^{\text{h}}    
                \Big)^*
                \big(
                    {L_{e^{-\widehat{zW}}}}_{*} 
                    \tfrac{\partial}{\partial \theta}\big|_{e^{\widehat{zW}}}  
                \big)^{\text{h}}    
            \Big)
         \Big) 
        }  
  \\
 &=\sqrt{
         \frac{1}{8} 
         \text{Re}
         \Big( 
            \text{Tr}
            \Big(
                 \left( 
                      \begin{array}{cccc}
                         \Upsilon(z)^*\Upsilon(z)   & O \\
                              O  &      \Upsilon(z)\Upsilon(z)^*  
                      \end{array}
                 \right)
            \Big)
         \Big) 
        }  
  \\
 &=\frac{1}{2}\sqrt{ \sum_{j=1}^n {\sinh^2(2\sigma_jr)} },
\end{align*}
which show the former one of the part (iii). 

To show the latter one, assume $\sigma_1 = \cdots = \sigma_q > 0 =\sigma_{q+1} = \cdots \sigma_n.$ Then,
$\sigma_1 = \tfrac{1}{\sqrt{q}}$ from $\sigma_1^2 + \cdots + \sigma_n^2 =1$
and
\begin{align*}
 \omega_1 
 &= \tfrac{\sqrt{q}}{2}\sinh{\big(\tfrac{2}{\sqrt{q}}r\big)}\, dr\wedge d\theta 
 \\
 &= \sqrt{q} \sinh{\big(\tfrac{1}{\sqrt{q}}r\big)} 
    \cosh{\big(\tfrac{1}{\sqrt{q}}r\big)} \, dr\wedge d\theta 
 \\
 &= d \Big(\tfrac{q}{2} \sinh^2{\big(\tfrac{1}{\sqrt{q}}r\big)} \Big) \, d\theta
 \\
 &= d\Big(\sum_{j=1}^{n}  \tfrac{1}{2}\sinh^2{(\sigma_j r)} \, d\theta   \Big)
 \\
 &= \omega_0.                  
\end{align*}
And 
$$
  W^*W = A\Sigma^* \Sigma A^*
  = A \,
    \text{diag}[
                \underbrace{\sigma_1^2, \cdots \sigma_1^2,}_{q \text{ times}}
                \underbrace{0, \cdots 0}_{(n-q) \text{ times}}
            \! ]  \,
    A^*           
$$
shows that
\begin{align*} 
  \text{det}(xI_n - W^*W) 
  &= \text{det}
    \big(
         A \,
        (
         xI_n- 
         \text{diag}[
                    \underbrace{\sigma_1^2, \cdots \sigma_1^2,}_{q \text{ times}}
                    \underbrace{0, \cdots 0}_{(n-q) \text{ times}}
                  \! ]  \,    
        )
         A^*
    \big) \\
  &= (x-\sigma_1^2)^q x^{(n-q)}. 
\end{align*}

\bigskip

\section{Proof of Theorem \ref{thm}}  \label{sec}

\medskip

We follow the notation in the proof of Proposition \ref{prop}.

\medskip

Let $K=U(m).$

\medskip
Note that the  equations
$$
\tilde{\pi} = \pi \circ \hat{\pi}, \quad
\hat{\pi} \circ L_g= \widehat{L}_g \circ \hat{\pi} \quad
\text{and} \quad
\tilde{\pi} \circ L_g= \mathbb{L}_g \circ \tilde{\pi}
$$
give
$$\pi \big(\widehat{L}_g(\hat{\pi}(\tilde{S}))\big) = S = \mathbb{L}_g (S_0).$$
And for any $g \in U(n,m)$ and for any $h_1 \in U(n),$ both 
$\widehat{L}_g, \widehat{R}_{h_1}: U(n,m)/U(m) \ra U(n,m)/U(m)$ are isometries preserving the  fibers of the bundle $\pi: U(n,m)/U(m) \ra D_{n,m}$
such that 
$$
\pi \circ \widehat{L}_g= \mathbb{L}_g \circ \pi 
\quad \text{and} \quad
\pi \circ \widehat{R}_{h_1} = \pi.
$$
Thus the curve $c$ on $S$ can be written by $\mathbb{L}_g \circ c_1$ for some curve $c_1$ on $S_0,$ and then for a horizontal lift $\hat{c}_1$ of $c_1$ on $U(n,m)/U(m),$  the horizontal lift $\hat{c}$ of $ c= \mathbb{L}_g \circ c_1$ will be given by 
$\widehat{R}_{h_1} \circ \widehat{L}_g \circ  \hat{c}_1$ for some 
$h_1 \in U(n).$ 
Note that if
$$ \hat{c}(1) = \hat{c}(0) \cdot h_0, \quad h_0 \in U(n)$$ then
$$
  (\widehat{R}_{h_1} \circ \widehat{L}_g \circ  \hat{c}_1)(1)
  = (\widehat{R}_{h_1} \circ \widehat{L}_g \circ  \hat{c}_1)(0) 
    \cdot \big( h^{-1}_1 \; h_0 \; h_1  \big).
$$
And, if $h_0 = e^{\Phi}$ for some $\Phi \in \mathfrak{u}(n),$ then
$$h^{-1}_1 \; h_0 \; h_1 = e^{\text{Ad}_{h^{-1}_1}\Phi}$$ 
and
$$\text{Tr}(\text{Ad}_{h^{-1}_1}\Phi) = \text{Tr}(\Phi).$$
So, Proposition \ref{prop}(ii) enables us to suppose that $g$ is the identity of $U(n,m)$ without loss of generality.

\bigskip
Before constructing the horizontal lift $\hat{c}(t)$ of 
$c(t) \in S_0, t \in [0,1],$ 
in the bundle
$ U(n,m)/U(m) \stackrel{\pi}\ra D_{n,m}, $
let $z(t) = r(t) e^{i \theta(t)}$ and 
$ \tilde{\gamma}(t) = e^{\widehat{z(t)W}},$ 
 the lift of $c$ in the bundle $U(n,m) \ra D_{n,m},$ lying in $\tilde{S} \subset U(n,m).$ 
From Lemma \ref{lemma}, consider the Lie subgroup 
$\widehat{G} \; \big(\!\subset \!U(n,m)\!\big)$ of the Lie algebra 
$\hat{\mathfrak{g}}$ generated by $\{\widehat{W}, \widehat{iW}\},$ which is the same as the one by $\{\widehat{X}, \widehat{iX}\}.$
Follow the notation of Lemma \ref{lemma} and let $\widehat{H}$ be the Lie group of the Lie algebra $\hat{\mathfrak{h}}$ generated by 
$\{\widetilde{V}_k | k=1,2, \cdots \}.$ Then $\widehat{H}$ is a subset of the intersection of $\widehat{G}$ and $U(n) \times U(m),$
which means that for each element of $\widehat{H},$ 
the map, given by its right multiplication in $\widehat{G}$ with the left invariant metric induced from the metric of $U(n,m)$ determined by (\ref{metric}), is an isometry of $\widehat{G}.$
Consider the inclusion map 
$\iota \!\!: \!\widehat{G} \hookrightarrow U(n,m),$
its Lie algebra homomorphism $d\iota: \hat{\mathfrak{g}} \ra \mathfrak{u}(n,m)$ 
and the Riemannian submersion 
$\widehat{pr}: \widehat{G} \ra \widehat{G}/\widehat{H}.$
Since 
\begin{align} \label{inclusion}
  d\iota(\widetilde{V}_k) = \widetilde{V}_k, \quad
  d\iota(\widetilde{X}_k) = \widetilde{X}_k \quad
  \text{and} \quad
  d\iota(\widetilde{iX}_k) = \widetilde{iX}_k
\end{align}
for $k=1,2, \cdots,$
Lemma \ref{lemma},  
the equation (\ref{eqn_r}) on
${L_{e^{-\widehat{zW}}}}_{*}\tfrac{\partial}{\partial r}\big|_{e^{\widehat{zW}}}$
and
the equation (\ref{eqn_theta}) on
$
 {L_{e^{-\widehat{zW}}}}_{*}
  \tfrac{\partial}{\partial \theta}\big|_{e^{\widehat{zW}}}
$
in the proof of Proposition \ref{prop}
show that $\tilde{S} $ is a subset of $\widehat{G}.$
The tangent vector of any horizontal curve $\tilde{c}$ in 
$\widehat{G} \ra \widehat{G}/\widehat{H}$
is spanned by $\widetilde{X}_{k}$ and by $\widetilde{iX}_k,$ $k=1,2, \cdots.$ 
The tangent vector of its induced curve $\iota \circ \tilde{c}$ in $U(n,m)$ 
is too, from the equation (\ref{inclusion}), which means that, 
by abusing of notation,
$\iota \circ \tilde{c} = \tilde{c}$ is a horizontal curve 
in the bundle $U(n,m) \ra D_{n,m}$
since both $\widetilde{X}_k$ and $\widetilde{iX}_k$ are still horizontal 
in this bundle.
In fact, $\widetilde{V}_k, \widetilde{X}_k$ and $\widetilde{iX}_k$ in $\mathfrak{g}$ are different from 
$\widetilde{V}_k, \widetilde{X}_k$ and $\widetilde{iX}_k$ in $\mathfrak{\hat{g}}$ as tangent vector fields, but they are
$\iota$-related, \emph{i.e.,}
\begin{align*} 
  \iota_* \widetilde{V}_k = \widetilde{V}_k \circ \iota, \quad
  \iota_* \widetilde{X}_k = \widetilde{X}_k \circ \iota \quad
  \text{and} \quad
  \iota_* \widetilde{iX}_k = \widetilde{iX}_k \circ \iota,
\end{align*}
and so $\dot{\overbrace{\iota \circ \tilde{c}}} = \iota_* \dot{\tilde{c}}$
is generated by $\widetilde{X}_k, \widetilde{iX}_k$ in $\mathfrak{g}.$
And the horizontal curve $\tilde{c}$ in $\widehat{G}$ with 
$\widehat{pr} \circ \tilde{c} = \widehat{pr} \circ \tilde{\gamma}$
can be given by
$\tilde{c}(t) = \tilde{\gamma}(t) e^{\widehat{\Psi}(t)}$
for some curve 
$\widehat{\Psi}: [0,1] \ra \hat{\mathfrak{h}} $
and will be regarded as a horizontal lift of $c$ 
in the bundle $U(n,m) \ra D_{n,m}.$ 
For 
\begin{align*}
  \widehat{\Psi}(t) 
  =   
        \left(
              \begin{array}{cccc}
                \eta_1(t) & O \\
                     O    & \eta_2(t) \\
              \end{array}
        \right)
   \in \hat{\mathfrak{h}} \subset \mathfrak{u}(n) + \mathfrak{u}(m),            
\end{align*}
let 
\begin{align*}
  \Psi(t) 
  =   
        \left(
              \begin{array}{cccc}
                \eta_1(t) & O \\
                     O    & O_m \\
              \end{array}
        \right)
   \in \mathfrak{u}(n) + \{O_m\}.            
\end{align*}
Then, for the curve $\bar{c}(t) = \tilde{\gamma}(t) e^{\Psi(t)}$ in $U(n,m),$
the equation
$$
  \tilde{c}(t)K 
  = \tilde{\gamma}(t) e^{\widehat{\Psi}(t)}K
  = \tilde{\gamma}(t) e^{\Psi(t)}K
  = \bar{c}(t)K
$$
holds. Therefore, 
to construct the horizontal lift $\hat{c}(t)=\tilde{c}(t)K = \bar{c}(t)K$ of $c(t)$ in the bundle
$ U(n,m)/U(m) \stackrel{\pi}\ra D_{n,m}, $
it suffices to find such a curve $\Psi (t) \in \mathfrak{u}(n).$

\bigskip
Note that
\begin{align}\label{decomp_gamma}
  \tilde{\gamma}(t) 
  &=   
        \left(
              \begin{array}{cccc}
                A & O \\
                O & B \\
              \end{array}
        \right) 
        \left(
              \begin{array}{cccc}
                \Gamma_n(t)  & \Lambda(t)^* \\
                \Lambda(t) & \Gamma_m(t) \\
              \end{array}
        \right) 
        \left(
              \begin{array}{cccc}
                A^* & O \\
                O & B^* \\
              \end{array}
        \right)
  \\             
  &=   
        \Omega
        \left(
              \begin{array}{cccc}
                \Gamma_n(t)  & \Lambda(t)^* \\
                \Lambda(t) & \Gamma_m(t) \\
              \end{array}
        \right) 
        \Omega^*,       
\end{align}
where
$$
  \Gamma_j(t) = \text{diag}[\cosh{(\sigma_1r(t))}, \cdots, \cosh{(\sigma_jr(t))}],
  \quad j=n,m  
$$
and for 
$
  \Lambda_a(t) = 
\text{diag}[e^{i\theta(t)}\sinh{(\sigma_1r(t))},  \cdots,e^{i\theta(t)}\sinh{(\sigma_ar(t))}],
$
\begin{displaymath}
  \Lambda(t)= 
     \left\{
            \begin{array}{ll}
              \Lambda_a(t)   &\text{ in case of } a=n=m, \\
              (\Lambda_a(t)\:\: O)^T \in M_{m\times n}&\text{ in case of }a=n<m,\\
              (\Lambda_a(t)\:\: O)   \in M_{m\times n}  
              &\text{ in case of }a=m<n.   
            \end{array} 
     \right.
\end{displaymath}
Lemma \ref{lemma}, $W^*W = \tfrac{1}{\big|\widehat{X}\big|^2} X^*X$
and $WW^* = \tfrac{1}{\big|\widehat{X}\big|^2} XX^*$
enable us to consider a curve 
$$
  \widehat{\Psi}(t) =   
        \left(
              \begin{array}{cccc}
                i \sum_{k=1}^{q} \phi_{k}(t)(W^*W)^{k} & O \\
                O  &  -i \sum_{k=1}^{q} \phi_{k}(t)(WW^*)^{k} \\
              \end{array}
        \right)
  \in  \hat{\mathfrak{h}} 
$$
such that $q=\text{rk}W$ and that
$$\tilde{c}(t)=\tilde{\gamma}(t) e^{\widehat{\Psi}(t)}.$$
Let
$$
  \Psi(t) =   
        \left(
              \begin{array}{cccc}
                i \sum_{k=1}^{q} \phi_{k}(t)(W^*W)^{k} & O \\
                O  &  O_{m} \\
              \end{array}
        \right)
  \in \mathfrak{u}(n).
$$
Then, by abusing of notations,we can say that
$\Psi(t) \in \text{Span}_{\bbr}\{i(X^*X)^k\}_{k=1}^{q}. $
And 
\begin{align*}
  \Psi(t)
  &=
  \Omega \:
  \text{diag}\Big[       
                   i \sum_{k=1}^{q} \sigma_1^{2k}\phi_k(t), 
                   \: \cdots, \: 
                   i \sum_{k=1}^{q} \sigma_{n}^{2k}\phi_k(t),
                   \underbrace{0, \: \cdots, \: 0}_{m\text{-times}}
              \Big] \:
  \Omega^*.  
\end{align*}
So, 
we get
\begin{align} \label{decomp_e_Psi}
  e^{\Psi (t)}
  &=
  \Omega \:
  \text{diag} \Big[
                    e^{i \sum_{k=1}^{q} \sigma_1^{2k}\phi_k(t)}, 
                    \:\cdots,\:                  
                    e^{i \sum_{k=1}^{q} \sigma_{n}^{2k}\phi_k(t)},
                    \underbrace{1, \:\cdots, \:  1}_{m\text{-times}}
              \Big] \:
   \Omega^*           
\end{align}
and
$$
  \hat{c}(t)
  =\tilde{\gamma}(t) e^{\widehat{\Psi}(t)} K
  =\tilde{\gamma}(t) e^{\Psi(t)} K
  =\bar{c}(t)K
$$
for $\bar{c}(t)=\tilde{\gamma}(t) e^{\Psi(t)}.$
Note 
\begin{align*}
  &L_{{\bar{c}(t)^{-1}}_{*}} \dot{\bar{c}}(t) + {\mathfrak u}(m) \\
  &= e^{-\Psi (t)}\tilde{\gamma}(t)^{-1}
     \big(
          \tilde{\gamma}'(t) e^{\Psi(t)}
          + \tilde{\gamma}(t) e^{\Psi(t)}\Psi'(t)
     \big)
     + {\mathfrak u}(m)  \\
  &= \big(
          e^{-\Psi (t)}\tilde{\gamma}(t)^{-1} \tilde{\gamma}'(t)e^{\Psi (t)} 
          + \Psi'(t)
     \big)
     + {\mathfrak u}(m),
\end{align*}
and the direct calculation 
$e^{-\Psi (t)}\tilde{\gamma}(t)^{-1} \tilde{\gamma}'(t)e^{\Psi (t)} + \Psi'(t)$
through the singular value decompositions (\ref{decomp_gamma}) and (\ref{decomp_e_Psi})  of $\tilde{\gamma}$ and $e^{\Psi(t)}$ says that 
its first $(n \times n)$-block matrix is
$$
  A  
  \:\:
  \text{diag} \Big[ 
                    i \,
                    \Big( 
                          \sum_{k=1}^{q} \sigma_j^{2k}\phi_k'(t)
                          -\theta'(t) \sinh^2(\sigma_jr(t))
                    \Big) 
              \Big]_{j=1}^n 
  \:\:
  A^*.
$$
Note that $\sigma_{q+1} = \cdots =\sigma_{n} = 0$  if $n\! > \!q= \text{rk}W.$
\\
Then
$\hat{c}(t)=\bar{c}(t)K$ is a horizontal curve in $G/K$ if and only if 
$$
  \sum_{k=1}^{q}\sigma_j^{2k}\phi_k'(t)
  = \theta'(t) \sinh^2(\sigma_jr(t)),
  \qquad j=1, \cdots, q.
$$
Therefore, Proposition \ref{prop} says that, for 
$
  \Psi:= \Psi(1) - \Psi(0) 
  \in \text{Span}_{\bbr}\{i(X^*X)^k\}_{k=1}^{q} \subset \mathfrak{u}(n)
$
and for the region $D (\subset S_0)$ enclosed by the given orientation-preserving curve $c:[0,1] \ra S_0,$ 
\begin{align*}
  \text{Tr}\big(\Psi \big) 
  &= i \: \sum_{j=1}^n\sum_{k=1}^{q}\sigma_j^{2k}(\phi_k(1) - \phi_k(0))
  \\
  &= i \: \int_{0}^{1} \sum_{j=1}^{n}  \theta'(t) \sinh^2(\sigma_jr(t)) dt \\
  &= 2 i \int_{[0,1]}\:\sum_{j=1}^{n}\tfrac{1}{2}\sinh^2{(\sigma_jr(t))}\,\theta'(t)dt  \\
  &= 2 i \int_{c}\:\sum_{j=1}^{n}\tfrac{1}{2}\sinh^2{(\sigma_jr)}\,d\theta  \\
  &= 2 i \int_{D}d\Big(\sum_{j=1}^{n}\tfrac{1}{2}\sinh^2{(\sigma_jr)}\,d\theta\Big) \\
  &= 2 i \int_D \omega_0 \\
  &= 2 i \,\text{Area}(c).
\end{align*}
And, $\tilde{\gamma}(0) = \tilde{\gamma}(1)$ and 
the Lie bracket $[i(X^*X)^k, i(X^*X)^j]=O_n$ in 
$\mathfrak{u}(n)$ say that 
$$
\bar{c}(0)^{-1}\bar{c}(1) 
= \big(\tilde{\gamma}(0) e^{\Psi(0)}\big)^{-1} 
  \big(\tilde{\gamma}(1) e^{\Psi(1)}\big)
= e^{-\Psi(0)}  e^{\Psi(1)}
= e^{\Psi}.
$$
Since the holonomy displacement is given by
the right action of $e^{\Psi} \in U(n),$ \emph{i.e.},
$$
  \hat{c}(1) = \bar{c}(1)K =(\bar{c}(0) \, e^{\Psi})\,K 
  =\bar{c}(0)K\cdot e^{\Psi} = \hat{c}(0) \cdot e^{\Psi},
$$
the theorem is proved.

\end{proof}

\bibliographystyle{amsplain}

\begin{thebibliography}{20}

\bibitem{BC}
 T. Byun, Y. Choi, The topological aspect of the holonomy displacement on the pricipal $U(n)$ bundles over Grassmannian manifiolds, Topology Appl. 196 (2015), 8-21. 


\bibitem{CL}
 Y. Choi,  K.B. Lee,  Holonomy displacements  in the  Hopf bundles over  $\bbc H^n$ and the complex Heisenberg groups, J. Korean Math. Soc. 49, 733--743 (2012)

\bibitem{GW}
 D.Gromoll, G.Walschap,  Metric foliations and curvature, Progress in mathematics, vol.268, Birkhauser


\bibitem{HJ}
 R.A.Horn, C.R.Johnson, Matrix Analysis, second edition, Cambridge University Press, 2013

\bibitem{KN}
 S. Kobayashi and K. Nomizu, Foundations of differential geometry, Vol. II, Reprint of the 1969 original. Wiley Classics Library, A Wiley-Interscience Publication, John Wiley and Sons, Inc., New York, 1996.



\bibitem{Pin}
 U. Pinkall, Hopf tori in $S^3$,
Invent. Math. 81, 379--386 (1985)


\end{thebibliography}

\end{document}